\newtheorem{theorem}{Theorem}[section]
\newtheorem{lemma}[theorem]{Lemma}
\newtheorem{proposition}[theorem]{Proposition}
\theoremstyle{definition}
\theoremstyle{remark}
\newtheorem{remark}[theorem]{Remark}
\numberwithin{equation}{section}
\let\oldmarginpar\marginpar
\renewcommand\marginpar[1]{\-\oldmarginpar[\raggedleft\footnotesize #1]
{\raggedright\footnotesize #1}}
\theoremstyle{plain}
\newtheorem*{theorem*}{Theorem}
\newtheorem*{corollary*}{Corollary}
\newcommand{\cE}{\mathcal{E}}
\newcommand{\C}{\mathbb{C}}
\newcommand{\CP}{\mathbb{C}\mathbf{P}}
\newcommand{\R}{\mathbb{R}}
\newcommand{\Z}{\mathbb{Z}}
\renewcommand{\emptyset}{\varnothing}
\newcommand{\Lie}{\text{Lie}}
\DeclareMathOperator{\Hom}{Hom}
\begin{document}

\title{Holomorphic polyvector fields on toric varieties}

\author{Wei Hong}
\address{School of Mathematics and Statistics, Wuhan University, Wuhan, 430072, China}
\address{Hubei Key Laboratory of Computational Science, Wuhan University, Wuhan, 430072, China}
\email{hong\textunderscore  w@whu.edu.cn}

\keywords{toric varieties, holomorphic polyvector fields, lattice points, polyhedron}

\begin{abstract}
In this paper, we give an explicit description of holomorphic polyvector fields on  smooth compact toric varieties, which generalizes Demazure's result of holomorphic vector fields on toric varieties. 
\end{abstract}

\maketitle


\section{introduction}\label{sect-Intr}

Polyvector fields appeared in the recent work of many mathematicians. 
Barannikov and Kontsevich \cite{B-K 98} shows that polyvector fields plays important roles in deformation theory and mirror symmetry. 
In \cite{D-R-W 15}, Dolgushev, Rogers and Willwacher gives a further study of deformation theory related to polyvector fields. 
In differential geomtry, Hitchin  \cite{Hitchin 11} study some problems related to holomorphic polyvector fields.  Polyvector fields also appeared naturally in generalized complex geometry according to Gualtieri's work \cite{Gualtieri 11}.

This paper is devoted to the study of holomorphic polyvector fields on toric varieties.
In \cite{Demazure}, Demazure described all the holomorphic vector fields on smooth compact toric varieties. In this paper, we give an explicit description of all holomorphic polyvector fields on smooth compact toric varieties in Theorem \nameref{General-multiVect-thm}, which generalizes Demazure's results of holomorphic vector fields on toric varieties.

Recall that a toric variety \cite{Cox} is an irreducible variety $X$ such that
\begin{enumerate}
\item $T=(\C^*)^n$ is a Zariski open set of $X$, 
\item the action of $T=(\C^*)^n$ on itself extends to an action of $T=(\C^*)^n$ on $X$.
\end{enumerate}
One may consult \cite{Oda} and \cite{Fulton} for more details of toric varieties.

Let $N=\Hom(\C^*,T)\cong\Z^n$ and $M=\Hom(T, \C^*)$.
Then $M\cong \Hom_\Z(N,\Z)$ and $N\cong \Hom_\Z(M,\Z)$.
Each element $m$ in $M$ gives rise to a character $\chi^m\in Hom(T,\C^*)$,
which can also be considered as a rational function on $X$. 
Let $N_{\R}=N\otimes_{\Z}\R\cong\R^n$. Let $M_{\R}=M\otimes_{\Z}\R\cong\R^n$ 
be the dual space of $N_{\R}$.


Let $X=X_\Delta$ be a smooth compact toric variety associated with a fan $\Delta$ in $N_\R\cong\R^n$. 
The $T$-action on $X$ induces a $T$-action on 
$H^0(X, \wedge^k T_{X})$, the vector space of holomorphic $k$-vector fields on $X$. 
Denote by $V_I^k$ the weight space corresponding to the character $I\in M$. 
We have
\begin{equation}\label{VIk-eqn}
H^0(X, \wedge^kT_{X})=\bigoplus_{I\in M}V_I^k.
\end{equation}
Since $H^0(X, \wedge^kT_{X})$ is a finite dimensional vector space, 
there are only finite elements $I\in M$ such that $V_I^k\neq 0$.

Next we give an explicit description of the vector spaces $V_I^k$ for smooth compact toric varieties. 

\begin{itemize}
\item
Let $N_\C=N\otimes_{\Z}\C$. 
Since $T\cong N\otimes_\Z\C^*$, we have $\Lie(T)\cong N_\C$, 
where $\Lie(T)$ denotes the Lie algebra of $T$. 
We define a map $\rho:N_\C\rightarrow \mathfrak{X}(X)$ by
\begin{equation} \label{rho-def-eqn}
\rho:N_\C=N\otimes_\Z\C\cong \Lie(T)\rightarrow \mathfrak{X}(X),
\end{equation}
where $\Lie(T)\rightarrow \mathfrak{X}(X)$ is defined by the infinitesimal action of the Lie algebra $\Lie(T)$ on $X$. 
And the map $\rho:N_\C\rightarrow \mathfrak{X}(X)$ is determined by
$$\rho(x)(\chi^m)=\langle x,m\rangle\chi^m$$
for any $x\in N_\C$ and $m\in M$.
Since the action map $T\times X\rightarrow X$ is holomorphic, 
the images of $\rho$ are holomorphic vector fields on $X$.
By abuse of notation, the induced maps
\begin{equation} \label{rho-def-eqn2}
\wedge^k N_\C\rightarrow\mathfrak{X}^k(X)
\end{equation}
 are also denoted by $\rho$ for $2\leq k\leq n$.
 Let $W=\rho(N_\C)$, $W^k=\wedge^k W$ and $W^0=\C$. 
Then $W^k$ is a subspace of $H^0(X, \wedge^kT_{X})$. 
And the map $\wedge^k N_\C\xrightarrow{\rho} W^k$ is an isomorphism.

\item
Let $\Delta(k)$ $(0\leq k\leq n)$ the set of all $k$-dimensional cones in 
$\Delta$.
Suppose that $\alpha_t ~(1\leq t\leq r)$ are all one dimensional cones in $\Delta(1)$. 
Let  $e(\alpha_t)\in N$ $(1\leq t\leq r)$ be the corresponding primitive elements, i.e., the unique generator of $\alpha_t \cap N$. 
Let $P_\Delta$ be a polytope in $M_\R$ defined by
\begin{equation*}
P_\Delta=\bigcap_{\alpha_t\in \Delta(1)}\{I\in M_\R\mid \langle I,e(\alpha_t)\rangle\geq -1\}.
\end{equation*}
Since $X_{\Delta}$ is compact, $P_{\Delta}$ is a compact polytope in $M_{\R}$. 
Let 
\begin{equation*}
S_\Delta=\{I\mid I\in M\cap P_{\Delta}\}.
\end{equation*}
Then $S_{\Delta}$ is a non-empty finite set and $0\in S_{\Delta}$.
We denote by $P_\Delta(i)$ the set of all $i$-dimensional faces of the polytope $P_\Delta$.  Let
\begin{equation*}
S(\Delta,i)=\bigcup_{F_j\in P_\Delta(n-i)}\{I\in int(F_j)\cap M\}
\end{equation*}
 for $0\leq i\leq n$, where $int(F_j)$ denotes the relative interior of the face $F_j$.
Let
\begin{equation*}
 S_k(\Delta)=\bigcup_{0\leq i\leq k}S(\Delta,i)
 \end{equation*}
  for all $0\leq k\leq n$. 
  
\item  
For any $I\in S_{\Delta}$, there exists a unique face $F_I(\Delta)$ of $P_{\Delta}$, 
such that $I\in int(F_I(\Delta))\cap M$. 
Let
\begin{equation*}
E_I(\Delta)=\{e(\alpha_t)\mid\alpha_t \in\Delta(1)~\text{and}~\langle I,e(\alpha_t)\rangle=-1\}.
\end{equation*} 
Suppose that $E_I(\Delta)=\{e(\alpha_{s_1}),\ldots e(\alpha_{s_l})\}$, 
where $1\leq s_1<\ldots<s_l\leq r$.
Let $F^{\perp}_I(\Delta)\subseteq N_{\R}$ be the normal space of $F_I(\Delta)$,
which is defined by
\begin{equation*}
F^{\perp}_I(\Delta)=\sum_{1\leq t\leq l} \R\cdot e(\alpha_{s_t}).
\end{equation*}
And we set
\begin{equation*} 
F^{\perp}_I(\Delta)=0 \quad\text{if}\quad E_I(\Delta)=\emptyset.
\end{equation*}
Suppose that $I\in S(\Delta,i)$. 
Then we have $\dim F_I(\Delta)=n-i$ and $\dim F^{\perp}_I(\Delta)=i$.

\item
We choose a nonzero element $\mathcal{E}_I(\Delta)\in\wedge^i F^{\perp}_I(\Delta)\cong\R$.
Since $F^{\perp}_I(\Delta)\subseteq N_{\R}\subseteq N_{\C}$,
$\mathcal{E}_I(\Delta)$ can be considered as an element in $\wedge^i N_{\C}$.
Let $\mathcal{V}_I(\Delta)=\rho(\mathcal{E_I}(\Delta))\in W^i$ be an $i$-vector field on $X=X_{\Delta}$.
Let
\begin{equation*}
W_I^k(\Delta)=
\begin{cases}
\C\cdot\mathcal{V}_I(\Delta)\wedge  W^{k-i}\quad &\text{for}~ 0<i\leq k,\\
W^k&\text{for}~ i=0,\\
0\quad &\text{for}~i>k.
\end{cases}
\end{equation*}
Then $W_I^k(\Delta)$ is a subspace of $W^k$.
We define
\begin{equation*}
V_I^k(\Delta)=\chi^I\cdot W_I^k(\Delta)=\{\chi^I\cdot w\mid w\in W_I^k(\Delta)\}.
\end{equation*}
The elements in $V_I^k(\Delta)$ are considered as meromorphic $k$-vector fields on $X=X_{\Delta}$. In section \ref{sect-multi}, we prove that for any $I\in S_k(\Delta)$, the
$k$-vector fields in $V_I^k(\Delta)$ have moveable singularities. And we use them to represent the corresponding holomorphic $k$-vector fields in this paper.
\end{itemize}

The next theorem gives a description of the holomorphic polyvector fields on toric varieties. 
\begin{theorem*}[A]\label{General-multiVect-thm}
Let $X=X_ \Delta$ be a smooth compact toric variety associated with a fan $ \Delta$ in $N_\R$. 
\begin{enumerate}
\item  We have the following decomposition
\begin{equation}\label{General-multiVect-thm-eqn1}
H^0(X,\wedge^kT_{X})=\bigoplus_{I\in S_k(\Delta)}V_I^k(\Delta)
\end{equation}
for all $0\leq k\leq n$, where $V_I^k(\Delta)=V_{-I}^k$ for all $I\in S_k(\Delta)$.
\item 
The dimension of holomorphic polyvector fields on $X=X_\Delta$ can be computed by
\begin{equation}\label{General-multiVect-thm-eqn2}
\dim H^0(X,\wedge^kT_{X})
=\sum_{0\leq i\leq k}~\sum_{F_j\in P_{\Delta}( n-i)}{{n-i}\choose{k-i}}\cdot\#(int(F_j)\cap M)
\end{equation}
for all $0\leq k\leq n$.
\end{enumerate}
\end{theorem*}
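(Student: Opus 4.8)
The plan is to reduce the polyvector case to the (already known) Demazure description of holomorphic vector fields by analyzing the weight decomposition \eqref{VIk-eqn} cone by cone. First I would recall that on the affine chart $U_\sigma$ associated with a maximal cone $\sigma\in\Delta(n)$, the algebra of regular functions is $\C[\sigma^\vee\cap M]$ and the module of regular $k$-vector fields is the free $\C[\sigma^\vee\cap M]$-module $\wedge^k N_\C\otimes \C[\sigma^\vee\cap M]$, since $X_\Delta$ is smooth and $\sigma$ is generated by part of a $\Z$-basis of $N$. A global holomorphic $k$-vector field is an element $\xi\in\wedge^k N_\C\otimes\C[M]$ (using $T\subseteq X$ open and dense) that is regular on every $U_\sigma$. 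Decomposing into $T$-weights, $\xi=\sum_I \chi^I\cdot \eta_I$ with $\eta_I\in\wedge^k N_\C$, and the condition "$\chi^I\eta_I$ extends over $U_\sigma$ for all maximal $\sigma$" becomes a purely combinatorial constraint on the pair $(I,\eta_I)$ that I would identify with membership in $V_I^k(\Delta)$. This is the conceptual core and should be organized as a sequence of lemmas in Section~\ref{sect-multi}.

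The key local computation is this: for a maximal cone $\sigma$ with primitive generators $e(\alpha_{t_1}),\dots,e(\alpha_{t_n})$ forming a $\Z$-basis of $N$, write $\eta_I$ in the dual-type basis $e(\alpha_{t_1})\wedge\cdots$; then $\chi^I e(\alpha_{t_{j_1}})\wedge\cdots\wedge e(\alpha_{t_{j_k}})$ is regular on $U_\sigma$ if and only if $\chi^I$ is regular on $U_\sigma$ after "dividing out" the directions $e(\alpha_{t_{j_1}}),\dots,e(\alpha_{t_{j_k}})$ — concretely, if and only if $\langle I,e(\alpha_{t})\rangle\ge 0$ for all $t\notin\{j_1,\dots,j_k\}$ and $\langle I,e(\alpha_{t})\rangle\ge -1$ for $t\in\{j_1,\dots,j_k\}$. (This is exactly the polyvector analogue of Demazure's root computation: a vector field $\chi^I e(\alpha)$ is regular iff $I$ pairs $\ge 0$ with all rays except possibly $\alpha$, where it may be $-1$.) Running this over all maximal cones shows: $\chi^I\eta_I$ is a global holomorphic $k$-vector field iff $I\in P_\Delta\cap M=S_\Delta$ and $\eta_I$ lies in the span of $e(\alpha_{s_1})\wedge\cdots$ over index sets that include all rays $\alpha$ with $\langle I,e(\alpha)\rangle=-1$, i.e. $\eta_I\in \mathcal E_I(\Delta)\wedge\wedge^{k-i}N_\C$ when $i=\dim F_I^\perp(\Delta)>0$ and $\eta_I\in\wedge^k N_\C$ when $i=0$; when $i>k$ no nonzero $\eta_I$ works. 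This identifies the weight space $V_I^k$ with $V_I^k(\Delta)=\chi^I W_I^k(\Delta)$, and shows it is nonzero precisely for $I\in S_k(\Delta)$, giving \eqref{General-multiVect-thm-eqn1}; the symmetry $V_I^k(\Delta)=V_{-I}^k$ is just a bookkeeping identity between the two indexing conventions for weights.

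For part (2) I would simply count dimensions in the decomposition just established. For $I\in S(\Delta,i)$ with $0<i\le k$, $\dim V_I^k(\Delta)=\dim\bigl(\C\cdot\mathcal V_I(\Delta)\wedge W^{k-i}\bigr)=\binom{n-i}{k-i}$, because $\mathcal V_I(\Delta)$ is a fixed decomposable element of $\wedge^i W$ built from the $i$-dimensional $F_I^\perp(\Delta)$, and wedging a fixed decomposable $i$-vector with $\wedge^{k-i}$ of the complementary $(n-i)$-space has rank $\binom{n-i}{k-i}$; for $i=0$, $\dim V_I^k(\Delta)=\dim W^k=\binom{n}{k}=\binom{n-0}{k-0}$, consistent with the formula. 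Summing over $I$, grouping the $I$'s by which relative-interior-of-a-face $\mathrm{int}(F_j)$ they lie in (with $F_j$ of dimension $n-i$), yields exactly \eqref{General-multiVect-thm-eqn2}.

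The main obstacle I anticipate is the local regularity lemma for the monomial polyvector fields $\chi^I\,e(\alpha_{t_{j_1}})\wedge\cdots\wedge e(\alpha_{t_{j_k}})$ on each affine chart, and especially the gluing step: one must check that a weight vector regular on every maximal chart is automatically regular everywhere (handled by the orbit–cone correspondence and smoothness, so that it suffices to test on $U_\sigma$ for $\sigma\in\Delta(n)$), and — the genuinely subtle point — that the constraints coming from \emph{different} maximal cones containing a given face are compatible and jointly pin down $\eta_I$ to be divisible by $\mathcal E_I(\Delta)$. Making the linear algebra of "divisibility of a $k$-vector by a fixed decomposable $i$-vector" precise and coordinate-independent, and matching it cleanly with the face structure of $P_\Delta$ near $I$, is where the argument needs the most care; everything else is either Demazure's computation reused or elementary multilinear algebra.
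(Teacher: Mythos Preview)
Your plan is correct and follows essentially the same route as the paper: weight-decompose, reduce to the affine charts $U_\sigma$ for $\sigma\in\Delta(n)$, determine the local regularity condition for $\chi^I\cdot\eta_I$ on each $U_\sigma$ (this is the paper's Lemma~\ref{VIksigma-lem1}, quoted from \cite{Hong 19}), and then intersect over all maximal cones to obtain the global description (Lemma~\ref{weightspace-lem1} and Proposition~\ref{weightspace-lem}). The ``divisibility'' step you flag as the subtle point is handled in the paper exactly as you suggest, via the coordinate-free characterization $x\in N_I^k(\Delta)\iff x\wedge y=0$ for all $y\in F_I^\perp(\Delta)$ (Lemmas~\ref{WDelta-lem} and~\ref{Wsigma-lem}), which makes the intersection $\bigcap_\sigma N_I^k(\sigma)=N_I^k(\Delta)$ immediate (Proposition~\ref{NIk-lem}); your dimension count for part~(2) is likewise identical to the paper's.
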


We give some comments for Theorem \nameref{General-multiVect-thm}:
\begin{itemize}
\item In the case of $k=1$, the above theorem was proved by Demazure \cite{Demazure}. 
And the set $$R(N, \Delta)=S_1(\Delta)\backslash\{0\}$$ is called the root system for 
$(N, \Delta)$.
\item In the case of $k=n$, we have the well-known results
\begin{gather*}
H^0(X,\wedge^nT_{X})\cong
\bigoplus_{I\in P_{\Delta}\cap M}\C\chi^I\cdot   W^n,\\
\dim H^0(X,\wedge^nT_{X})=\#(P_{\Delta}\cap M).
\end{gather*}
\item The special case of $X=\CP^n$ was proved in \cite{Hong 19}. 
We would like to point out a sign mistake in the Theorem 3.3 of \cite{Hong 19}, where $V_I^k$ should be the weight space corresponding to the character $-I$. 
\item 
If $X$ is a toric Fano manifold, Equation \eqref{General-multiVect-thm-eqn2} can be obtained by Theorem $3.6$ in \cite{Materov 02}  
since $\wedge^k T_X\cong\Omega^{n-k}_X\otimes\wedge^n T_X$.
\end{itemize}

Notice that the Gerstenhaber algebra structure of the polyvector fields on the algebraic torus is studied in  \cite{M-R 19}.  By Theorem \nameref{General-multiVect-thm}, the Gerstenhaber algebra structures of the holomorphic polyvcetor fields on smooth compact toric varieties can be studied in the similar way. But it is more complicated since it is related to the lattices in polytopes. However, those will be left for future works.

The paper is organized in the following way.
In Section \ref{polytope-sect}, we construct some sets and vector spaces related to toric varieties and study their properties.
In Section \ref{sect-multi},  we study holomorphic polyvector fields on toric varieties and prove  the Theorem \nameref{General-multiVect-thm}. 

{\bf Acknowledgements} 
We would like to thank Yu Qiao, Mathieu Sti\'{e}non, Xiang Tang and Ping Xu for helpful discussions and comments. Hong's research is partially supported by NSFC grant 12071241 and NSFC grant 12071358.

\section{Toric varieties, lattices and polytopes}\label{polytope-sect}

\subsection{The polytope $P_{\Delta}$ and the set $S_\Delta$} 
Let $X=X_\Delta$ be a smooth compact toric variety.
Suppose that $\alpha_t ~(1\leq t\leq r)$ are all one dimensional cones in $\Delta(1)$. 
Let  $e(\alpha_t)\in N$ $(1\leq t\leq r)$ be the corresponding primitive elements, i.e., the unique generator of $\alpha_t \cap N$. 
Let
\begin{equation*}
E(\Delta)=\{e(\alpha_1),\ldots, e(\alpha_r)\}.
\end{equation*}

The sets $S_\Delta$, $S(\Delta,i)$ and $S_k(\Delta)$ have been defined in the previous section. And we have
\begin{enumerate}
\item 
$S(\Delta,i)\cap S(\Delta,j)=\emptyset$ for all $0\leq i\neq j\leq n$.
\item
$S_0(\Delta)\subseteq S_1(\Delta)\subseteq\ldots\subseteq S_n(\Delta)=S_{\Delta}$
\item
$S_{k+1}(\Delta)=S_k(\Delta)\cup S(\Delta,k+1).$
\end{enumerate}

\begin{lemma}\label{S0Delta-lem}
Let $X=X_\Delta$ be a smooth compact toric variety of dimension $n$. Then we have
\begin{equation*}
S_0(\Delta)=\{0\}.
\end{equation*}
\end{lemma}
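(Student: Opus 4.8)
The plan is to unwind the definitions and show that $0$ is the only lattice point of $M$ lying in the relative interior of an $n$-dimensional face of $P_\Delta$, i.e.\ in the interior of $P_\Delta$ itself (as $\dim P_\Delta = n$ since $X_\Delta$ is compact, hence $\Delta$ is complete). Recall $S_0(\Delta) = S(\Delta, 0) = \bigcup_{F_j \in P_\Delta(n)} \{ I \in \mathrm{int}(F_j) \cap M \}$; since $P_\Delta$ has a unique $n$-dimensional face, namely $P_\Delta$ itself, we have $S_0(\Delta) = \mathrm{int}(P_\Delta) \cap M$. So the content of the lemma is that $\mathrm{int}(P_\Delta) \cap M = \{0\}$.

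First I would observe that $0 \in \mathrm{int}(P_\Delta)$: for every $\alpha_t \in \Delta(1)$ we have $\langle 0, e(\alpha_t)\rangle = 0 > -1$, so $0$ satisfies all the defining inequalities strictly, hence lies in the interior. This also reconfirms $\dim P_\Delta = n$. Next, suppose $I \in \mathrm{int}(P_\Delta) \cap M$; I must show $I = 0$. The key point is that $\Delta$ is a complete fan, so its one-dimensional cones $\{\alpha_t\}$ positively span $N_\R$; equivalently, the set $\{ e(\alpha_t) : 1 \le t \le r\}$ spans $N_\R$ and, more strongly, $0$ lies in the interior of their convex hull, so every $v \in N_\R$ can be written $v = \sum_t \lambda_t\, e(\alpha_t)$ with $\lambda_t \ge 0$. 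Being in the interior means $\langle I, e(\alpha_t)\rangle > -1$ for all $t$; since $I \in M$ and $e(\alpha_t) \in N$, the pairing $\langle I, e(\alpha_t)\rangle$ is an integer, so in fact $\langle I, e(\alpha_t)\rangle \ge 0$ for all $t$. Applying this to $-v$ for arbitrary $v$: writing $-v = \sum_t \lambda_t e(\alpha_t)$ with $\lambda_t \ge 0$ gives $\langle I, -v\rangle = \sum_t \lambda_t \langle I, e(\alpha_t)\rangle \ge 0$, i.e.\ $\langle I, v\rangle \le 0$ for all $v \in N_\R$; since $v$ is arbitrary this forces $\langle I, \cdot\rangle = 0$, hence $I = 0$.

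The main obstacle — really the only nontrivial input — is the completeness fact that the rays of a complete fan positively span $N_\R$, with $0$ in the interior of the convex hull of the primitive generators. This is standard (e.g.\ it follows from $|\Delta| = N_\R$ together with the fact that each top-dimensional cone is generated by rays of $\Delta$), so I would simply cite it from \cite{Cox} or \cite{Fulton}. Everything else is the elementary observation that an integer in $(-1,\infty)$ is $\ge 0$, combined with the duality pairing between $M$ and $N$. One should double-check the edge cases: that $P_\Delta$ is genuinely full-dimensional (so "relative interior of the $n$-face" means honest interior) and that the union defining $S(\Delta,0)$ ranges only over the single face $P_\Delta \in P_\Delta(n)$; both are immediate from $\dim P_\Delta = n$, which we verified above via $0 \in \mathrm{int}(P_\Delta)$.
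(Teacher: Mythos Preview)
Your proof is correct and follows essentially the same approach as the paper's: both arguments reduce to showing that $I\in S_0(\Delta)$ forces $\langle I,e(\alpha_t)\rangle\ge 0$ for every ray (via integrality of the pairing and the strict inequality defining the interior), and then use completeness of $\Delta$ to conclude that $\langle I,x\rangle\ge 0$ for all $x\in N_\R$, hence $I=0$. The paper phrases the completeness step slightly differently---it picks a maximal cone $\sigma$ containing $x$ and uses smoothness to write $x$ as a nonnegative combination of the $e(\alpha_t)$ on the edges of $\sigma$---whereas you invoke the positive spanning property directly; but this is the same idea. Two cosmetic remarks: your detour through $-v$ is unnecessary (writing $v=\sum\lambda_t e(\alpha_t)$ already gives $\langle I,v\rangle\ge 0$ for all $v$), and ``positively span $N_\R$'' is not quite equivalent to ``$0$ lies in the interior of the convex hull of the $e(\alpha_t)$,'' though only the former is needed and used.
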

\begin{proof}
For any $I\in S_0(\Delta)$, we have
\begin{equation}\label{S0Delta-lem-eqn1}
\langle I, e(\alpha_t)\rangle\geq 0
\end{equation}
for all $\alpha_t\in \Delta(1)$. 
Since $X$ is compact, we have
\begin{equation*}
\bigcup_{\sigma\in\Delta(n)}\sigma=N_\R.
\end{equation*} 
Therefore for any $x\in N_\R$, there exists $\sigma\in\Delta(n)$, such that $x\in\sigma$.
Since $X$ is smooth, $x\in\sigma$ can be written as
\begin{equation}\label{S0Delta-lem-eqn2}
x=\sum_{\alpha_t\in\Delta(1)\cap\sigma} \lambda_t e(\alpha_t), \quad\lambda_t\geq 0.
\end{equation}
 By Equation  \eqref{S0Delta-lem-eqn1} and Equation \eqref{S0Delta-lem-eqn2}, 
 we have
 \begin{equation*}
\langle I,x\rangle\geq 0
\end{equation*}
for all $x\in N_\R$, which implies $I=0$. 
Hence $S_0(\Delta)=\{0\}.$
\end{proof}

The following results are obviously. However, we need them later and put them as a lemma here. 
\begin{lemma}\label{SDelta-lem}
\begin{enumerate}
\item
For any $I\in S_{\Delta}$, the following statements are equivalent:
\begin{enumerate}
\item $I\in S(\Delta,i)$,
\item $F_I(\Delta)\in P_{\Delta}(n-i)$,
\item $\dim F^{\perp}_I(\Delta)=i$.
\end{enumerate}
\item
For any $I\in S_{\Delta}$, the following statements are equivalent:
\begin{enumerate}
\item $I\in S_k(\Delta)$,
\item $\dim F^{\perp}_I(\Delta)\leq k$.
\end{enumerate}
\end{enumerate}
\end{lemma}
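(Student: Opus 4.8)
The plan rests on one geometric fact about $P_\Delta$: since $\langle 0, e(\alpha_t)\rangle = 0 > -1$ for every $\alpha_t \in \Delta(1)$, the origin lies in the interior of $P_\Delta$, so $P_\Delta$ is full-dimensional. I would then invoke the standard description of the faces of a full-dimensional polytope presented by inequalities: for $I \in S_\Delta$, the inequalities active at $I$ (those that hold with equality) are exactly the ones indexed by $E_I(\Delta)$; the face $F_I(\Delta)$ is cut out of $P_\Delta$ by turning precisely those inequalities into equalities; and the affine hull of $F_I(\Delta)$ is the corresponding intersection of hyperplanes. The last point is checked by noting that a small neighbourhood of $I$ inside that intersection of hyperplanes still satisfies the remaining inequalities strictly, hence lies in $F_I(\Delta)$. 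The direction space of this affine hull is the annihilator in $M_\R$ of $\mathrm{span}_\R E_I(\Delta) = F_I^{\perp}(\Delta)$, which yields the key identity
\begin{equation*}
\dim F_I(\Delta) + \dim F_I^{\perp}(\Delta) = n .
\end{equation*}

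For part (1), I would first dispatch (a) $\Leftrightarrow$ (b) from the definitions together with the fact that a polytope is the disjoint union of the relative interiors of its faces: $I \in S(\Delta, i)$ means $I \in int(F_j)\cap M$ for some $F_j \in P_\Delta(n-i)$, and uniqueness of the face whose relative interior contains $I$ forces $F_j = F_I(\Delta)$, so $F_I(\Delta) \in P_\Delta(n-i)$; the converse is immediate from $I \in int(F_I(\Delta))\cap M \subseteq S(\Delta,i)$. The equivalence (b) $\Leftrightarrow$ (c) is then read off from the displayed identity, since $\dim F_I(\Delta) = n - i$ holds iff $\dim F_I^{\perp}(\Delta) = i$. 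This simultaneously reproves the assertion made just before the lemma.

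For part (2), I would use that the sets $S(\Delta, i)$ are pairwise disjoint with union $S_\Delta$ (the stated properties of the $S(\Delta,i)$), so each $I \in S_\Delta$ lies in exactly one of them; by part (1) that unique index equals $\dim F_I^{\perp}(\Delta)$. Hence $I \in S_k(\Delta) = \bigcup_{0 \le i \le k} S(\Delta, i)$ exactly when $\dim F_I^{\perp}(\Delta) \le k$.

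I expect the only real content — and the reason I would not simply write "obvious" — to be the polytope input in the first paragraph: identifying the active constraints at a relative-interior point with $E_I(\Delta)$ and deriving $\dim F_I(\Delta) = n - \dim F_I^{\perp}(\Delta)$. Everything else is bookkeeping with the properties of the $S(\Delta,i)$ already recorded. I would either cite a standard reference on convex polytopes for this fact or include the short self-contained argument using $0 \in int(P_\Delta)$ sketched above.
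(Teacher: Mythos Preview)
Your argument is correct. The paper itself offers no proof of this lemma: it is introduced with the line ``The following results are obviously. However, we need them later and put them as a lemma here,'' and no justification follows. So rather than comparing approaches, what you have done is supply the argument that the paper omits.

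Your identification of the actual content is accurate. The equivalences (a)$\Leftrightarrow$(b) in part~(1) and all of part~(2) are indeed straight bookkeeping from the definitions and the disjointness of the $S(\Delta,i)$. The only substantive step is (b)$\Leftrightarrow$(c), which amounts to the identity $\dim F_I(\Delta)+\dim F_I^{\perp}(\Delta)=n$, and your derivation of it is the standard one: since $0\in\operatorname{int}(P_\Delta)$ the polytope is full-dimensional, the face $F_I(\Delta)$ is $P_\Delta$ intersected with the hyperplanes of the active constraints $E_I(\Delta)$, a small $H$-neighbourhood of $I$ in that intersection $H$ of hyperplanes still lies in $P_\Delta$ (the inactive inequalities being strict at $I$), so $\operatorname{aff}(F_I(\Delta))=H$, and the direction space of $H$ is the annihilator of $\operatorname{span}_{\R}E_I(\Delta)=F_I^{\perp}(\Delta)$. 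This is exactly the fact the paper is silently invoking when it calls $F_I^{\perp}(\Delta)$ ``the normal space of $F_I(\Delta)$'' in the introduction. Your plan to either cite a convex-geometry reference or include the short self-contained argument is appropriate; either would be an improvement over leaving it unproved.
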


\subsection{The polyhedron $P_{\sigma}$ and the set $S_\sigma$} 
Let $\sigma$ be a smooth cone of dimension $n$. Then $\sigma$ can be written as
$$\sigma=\sum_{t=1}^{n}\R_{\geq 0}\cdot e_t(\sigma),$$
where $\{e_1(\sigma),\ldots, e_n(\sigma)\}$ is a $\Z$-basis of $N$.
Let
\begin{equation*}
 E(\sigma)=\{e_1(\sigma),\ldots, e_n(\sigma)\}.
 \end{equation*}

Let $P_{\sigma}$ be a polyhedron defined by
\begin{equation}\label{Psigma-eqn}
P_\sigma=\{I\in M_\R\mid \langle I, e_t(\sigma)\rangle\geq -1~ \text{for all}~ 1\leq t\leq n\}.
\end{equation}
Let
\begin{equation*}
S_{\sigma}=P_{\sigma}\cap M.
\end{equation*}

We denote by $P_\sigma(i)$ the set of all $i$-dimensional faces of the polyhedron $P_\Delta$.  Let
\begin{equation*}
S(\sigma,i)=\bigcup_{F_j\in P_\sigma(n-i)}\{I\in int(F_j)\cap M\}
\end{equation*}
 for $0\leq i\leq n$, where $int(F_j)$ denotes the relative interior of $F_j$.
 Then we have 
 \begin{equation*}
 S(\sigma,i)\cap S(\sigma,j)=\emptyset
 \end{equation*}
 for all $0\leq i\neq j\leq n$. 
Let
\begin{equation*}
 S_k(\sigma)=\bigcup_{0\leq i\leq k}S(\sigma,i)
 \end{equation*}
  for all $0\leq k\leq n$. 
 Then we have
$$S_0(\sigma)\subseteq S_1(\sigma)\subseteq\ldots\subseteq S_n(\sigma)
=S_{\sigma}.$$

For any $I\in S_{\sigma}$,  there exists a unique face $F_I(\sigma)$ of $P_{\sigma}$, 
such that $I\in int(F_I(\sigma))\cap M$. 
 Let 
$$E_I(\sigma)=\{e_t(\sigma)\in E(\sigma)\mid \langle I,e_t(\sigma)\rangle=-1\}.$$

Suppose that $E_I(\sigma)=\{e_{s_1}(\sigma),\ldots e_{s_j}(\sigma)\}$, 
where $1\leq s_1<\ldots<s_j\leq n$.
Let $F^{\perp}_I(\sigma)\subseteq N_{\R}$ be the normal space of $F_I(\sigma)$, which is defined by defined by 
\begin{equation*}
F^{\perp}_I(\sigma)=\sum_{1\leq t\leq j} \R\cdot e_{s_t}(\sigma).
\end{equation*}
And we define
\begin{equation*} 
F^{\perp}_I(\sigma)=0 \quad\text{if}\quad E_I(\sigma)=\emptyset.
\end{equation*}

Similar to Lemma \ref{SDelta-lem}, we have
\begin{lemma}\label{Ssigma-lem}
\begin{enumerate}
\item
For any $I\in S_{\sigma}$, the following statements are equivalent:
\begin{enumerate}
\item $I\in S(\sigma,j)$,
\item $F_I(\sigma)\in P_{\sigma}(n-j)$,
\item $\dim F^{\perp}_I(\sigma)=j$,
\item $\# E_I(\sigma)=j$.
\end{enumerate}
\item
For any $I\in S_{\sigma}$, the following statements are equivalent:
\begin{enumerate}
\item $I\in S_k(\sigma)$,
\item $\dim F^{\perp}_I(\sigma)\leq k$,
\item $\# E_I(\sigma)\leq k$.
\end{enumerate}
\end{enumerate}
\end{lemma}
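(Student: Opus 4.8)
The plan is to reduce everything to an explicit coordinate description of $P_\sigma$ coming from the smoothness of $\sigma$. Since $\{e_1(\sigma),\ldots,e_n(\sigma)\}$ is a $\Z$-basis of $N$, let $\{f_1,\ldots,f_n\}$ be the dual $\Z$-basis of $M$. Writing any $I\in M_\R$ as $I=\sum_{t=1}^{n}c_t f_t$ we have $c_t=\langle I,e_t(\sigma)\rangle$, so in these coordinates $P_\sigma$ is exactly the shifted orthant $\{(c_1,\ldots,c_n)\mid c_t\geq -1\ \text{for all}\ t\}$. First I would record this identification and observe that the faces of $P_\sigma$ are in bijection with subsets $A\subseteq\{1,\ldots,n\}$: to $A=\{s_1,\ldots,s_j\}$ corresponds the face $F_A=\{I\in P_\sigma\mid \langle I,e_{s_t}(\sigma)\rangle=-1,\ 1\leq t\leq j\}$, whose dimension is $n-j=n-\#A$ and whose relative interior consists of those $I$ with $\langle I,e_t(\sigma)\rangle=-1$ exactly for $t\in A$. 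In particular the relative interiors of the faces partition $P_\sigma$, so each $I\in S_\sigma$ lies in the relative interior of a unique face, namely $F_I(\sigma)=F_{A(I)}$ with $A(I)=\{t\mid \langle I,e_t(\sigma)\rangle=-1\}$; by definition $E_I(\sigma)=\{e_t(\sigma)\mid t\in A(I)\}$, so $\#E_I(\sigma)=\#A(I)$.

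With this in hand, part (1) is immediate. The equivalence (a)$\Leftrightarrow$(b) is just the definition of $S(\sigma,j)$ together with the uniqueness of $F_I(\sigma)$. The equivalence (b)$\Leftrightarrow$(d) follows from $\dim F_I(\sigma)=n-\#E_I(\sigma)$ established above. Finally (d)$\Leftrightarrow$(c): since $e_{s_1}(\sigma),\ldots,e_{s_j}(\sigma)$ are part of a $\Z$-basis of $N$, they are $\R$-linearly independent, hence $\dim F^{\perp}_I(\sigma)=\dim\sum_{t}\R\cdot e_{s_t}(\sigma)=\#E_I(\sigma)$, with both sides equal to $0$ when $E_I(\sigma)=\emptyset$.

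Part (2) then follows formally: $I\in S_k(\sigma)$ means $I\in S(\sigma,j)$ for some $0\leq j\leq k$, which by part (1) is equivalent to $\dim F^{\perp}_I(\sigma)=j$ for some $j\leq k$, i.e. $\dim F^{\perp}_I(\sigma)\leq k$, which by the equivalence of (c) and (d) in part (1) is in turn equivalent to $\#E_I(\sigma)\leq k$.

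The only place where anything must be checked rather than quoted is the face-structure claim, and even there the work is light: the main point, and the sole use of the hypothesis that $\sigma$ is smooth, is that $E(\sigma)$ is a $\Z$-basis of $N$, which simultaneously forces $P_\sigma$ to be a simple polyhedron (so that the face cut out by $j$ of the facet inequalities has codimension exactly $j$, with no unexpected coincidences among the bounding hyperplanes) and forces any subset of $E(\sigma)$ to be $\R$-linearly independent (giving $\dim F^{\perp}_I(\sigma)=\#E_I(\sigma)$). So I expect no genuine obstacle; once the shifted-orthant picture is set up the argument is essentially bookkeeping, which is why it parallels Lemma~\ref{SDelta-lem}.
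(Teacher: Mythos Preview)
Your argument is correct. The paper does not actually give a proof of this lemma: it is introduced with ``Similar to Lemma~\ref{SDelta-lem}, we have'' and no further justification, and Lemma~\ref{SDelta-lem} itself is prefaced with ``The following results are obviously.'' Your shifted-orthant description via the dual basis $\{f_1,\ldots,f_n\}$ is exactly the intended picture (the paper uses the same dual basis $\{e_1^*(\sigma),\ldots,e_n^*(\sigma)\}$ later in Section~\ref{sect-multi}), and it makes explicit the face combinatorics and the linear-independence step that the paper leaves to the reader. So there is nothing to compare: you have simply written out the omitted proof, and correctly identified that smoothness of $\sigma$ is what guarantees both the simple face structure of $P_\sigma$ and the equality $\dim F_I^\perp(\sigma)=\#E_I(\sigma)$.
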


Now we have the following proposition.
\begin{proposition}\label{PFS-lem}
Let $X_\Delta$ be a smooth compact toric variety. 
\begin{enumerate}
\item
We have
\begin{equation*}
P_{\Delta}=\bigcap_{\sigma\in\Delta(n)} P_{\sigma}
\end{equation*}
and 
\begin{equation*}
S_{\Delta}=\bigcap_{\sigma\in\Delta(n)} S_{\sigma}.
\end{equation*}
\item
For any $\sigma\in\Delta(n)$ and $I\in P_{\Delta}\subset P_{\sigma}$, 
$F^{\perp}_I(\sigma)$ is a subspace of $F^{\perp}(\Delta)$. 
Moreover, we have
\begin{equation*}
F^{\perp}_I(\Delta)=\sum_{\sigma\in\Delta(n)}F^{\perp}_I(\sigma).
\end{equation*}
\item
We have
\begin{equation*}
S_k(\Delta)\subseteq\bigcap_{\sigma\in\Delta(n)}S_k(\sigma).
\end{equation*}
\end{enumerate}
\end{proposition}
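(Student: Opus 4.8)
The plan is to work part by part, using the fact that a fan $\Delta$ covering $N_\R$ is glued from its maximal cones $\sigma\in\Delta(n)$, each of which is smooth, so its rays are part of a $\Z$-basis of $N$. For part (1), the inclusion $P_\Delta\subseteq\bigcap_\sigma P_\sigma$ is immediate from the definitions, since every ray $e(\alpha_t)$ appearing in the description of $P_\Delta$ is $e_s(\sigma)$ for some maximal cone $\sigma$ containing $\alpha_t$, so the defining inequality $\langle I,e(\alpha_t)\rangle\ge -1$ of $P_\Delta$ is among those defining $P_\sigma$. Conversely, if $I\in\bigcap_\sigma P_\sigma$, then for each $\alpha_t\in\Delta(1)$ pick any $\sigma\in\Delta(n)$ with $\alpha_t\subseteq\sigma$ (such $\sigma$ exists because $X$ is compact, so $\Delta$ is complete and every ray lies in some maximal cone); then $e(\alpha_t)\in E(\sigma)$ and $I\in P_\sigma$ gives $\langle I,e(\alpha_t)\rangle\ge -1$. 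Hence $I\in P_\Delta$. Intersecting with the lattice $M$ then gives $S_\Delta=\bigcap_\sigma S_\sigma$ directly.

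For part (2), fix $\sigma\in\Delta(n)$ and $I\in P_\Delta\subseteq P_\sigma$. Every $e_s(\sigma)\in E_I(\sigma)$ is a ray of $\sigma$, hence a ray $\alpha_t$ of $\Delta$ with $\langle I,e(\alpha_t)\rangle=\langle I,e_s(\sigma)\rangle=-1$, so $e_s(\sigma)\in E_I(\Delta)$; taking $\R$-spans shows $F^\perp_I(\sigma)\subseteq F^\perp_I(\Delta)$. For the reverse containment in the sum formula, take a generator $e(\alpha_t)\in E_I(\Delta)$, i.e. $\langle I,e(\alpha_t)\rangle=-1$; choosing $\sigma\in\Delta(n)$ with $\alpha_t\subseteq\sigma$, we get $e(\alpha_t)=e_s(\sigma)$ for some $s$ and $\langle I,e_s(\sigma)\rangle=-1$, so $e(\alpha_t)\in E_I(\sigma)\subseteq F^\perp_I(\sigma)$. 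Since $F^\perp_I(\Delta)$ is by definition the span of all such $e(\alpha_t)$, we conclude $F^\perp_I(\Delta)=\sum_{\sigma\in\Delta(n)}F^\perp_I(\sigma)$ (and the empty-set conventions for $E_I$ are consistent with this). Part (3) then follows from the previous two: if $I\in S_k(\Delta)$, Lemma \ref{SDelta-lem}(2) gives $\dim F^\perp_I(\Delta)\le k$; by part (1), $I\in S_\sigma$ for every $\sigma\in\Delta(n)$, and by part (2), $F^\perp_I(\sigma)\subseteq F^\perp_I(\Delta)$, so $\dim F^\perp_I(\sigma)\le k$, and Lemma \ref{Ssigma-lem}(2) yields $I\in S_k(\sigma)$ for every $\sigma$. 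Hence $S_k(\Delta)\subseteq\bigcap_\sigma S_k(\sigma)$.

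The main point to be careful about — the only step that is more than bookkeeping — is the claim that each ray $\alpha_t\in\Delta(1)$ is contained in some maximal cone $\sigma\in\Delta(n)$, which is where completeness of $\Delta$ (equivalently, compactness of $X_\Delta$) genuinely enters; without it the inclusion $\bigcap_\sigma P_\sigma\subseteq P_\Delta$ could fail because some defining inequality of $P_\Delta$ would not be ``seen'' by any $P_\sigma$. I would state this explicitly and invoke the standard fact that in a complete fan every cone is a face of an $n$-dimensional cone. Everything else is a direct unwinding of the definitions of $E_I$, $F^\perp_I$, and the stratifications $S(\,\cdot\,,i)$, together with Lemmas \ref{SDelta-lem} and \ref{Ssigma-lem}; note that part (3) is only an inclusion, not an equality, so no converse argument is needed there.
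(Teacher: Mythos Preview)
Your proof is correct and follows essentially the same approach as the paper: the paper also reduces everything to the observation that, by completeness, $\bigcup_{\sigma\in\Delta(n)} E(\sigma)=E(\Delta)$ (equivalently, every ray lies in some maximal cone), from which both $P_\Delta=\bigcap_\sigma P_\sigma$ and $E_I(\Delta)=\bigcup_\sigma E_I(\sigma)$ follow, and then part (3) is deduced via Lemmas~\ref{SDelta-lem} and~\ref{Ssigma-lem} exactly as you do. Your write-up is slightly more explicit about the two inclusions and about where compactness enters, but the logical content is identical.
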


\begin{proof}
\begin{enumerate}
\item
Since $X_\Delta$ is a smooth compact toric variety, we have 
$|\Delta|=\bigcup_{\sigma\in\Delta(n)}|\sigma|$. 
As a consequence, we have 
\begin{equation*}
\bigcup_{\sigma\in\Delta(n)}\{e_1(\sigma),\ldots,e_n(\sigma)\}=\{e(\alpha_1),\ldots,e(\alpha_r)\},
\end{equation*}
which implies
$$P_{\Delta}=\bigcap_{\sigma\in\Delta(n)} P_{\sigma}.$$
Since $S_\Delta=P_\Delta\cap M$ and $S_\sigma=P_\sigma\cap M$, 
we get that
$$S_{\Delta}=\bigcap_{\sigma\in\Delta(n)} S_{\sigma}.$$

\item
Since $E_I(\sigma)\subseteq E_I(\Delta)$,  
we get that $F^{\perp}_I(\sigma)$ is a subspace of $F^{\perp}_I(\Delta)$.  
And by $$\bigcup_{\sigma\in\Delta(n)} E_I(\sigma)=E_I(\Delta),$$
we get that $F^{\perp}_I(\Delta)=\sum_{\sigma\in\Delta(n)}F^{\perp}_I(\sigma)$. 
\item
For any $I\in S_k(\Delta)$, by Lemma \ref{SDelta-lem}, we have
$$\dim F^{\perp}_I(\Delta)\leq k.$$
For any $\sigma\in\Delta(n)$, 
since $F^{\perp}_I(\sigma)$ is a subspace of $F^{\perp}_I(\Delta)$, we have 
 $$\dim F^{\perp}_I(\sigma)\leq\dim F^{\perp}_I(\Delta)\leq k.$$
 By Lemma \ref{Ssigma-lem}, we have $I\in S_k(\sigma)$.
 
 Therefore, we have
 $$S_k(\Delta)\subseteq S_k(\sigma)$$
 for all $\sigma\in\Delta(n)$. And consequently,
 $$S_k(\Delta)\subseteq\bigcap_{\sigma\in\Delta(n)}S_k(\sigma).$$
\end{enumerate}
\end{proof}

\subsection{The vector space $N_{I}^k(\Delta)$ and $N_{I}^k(\sigma)$}
\subsubsection{The vector space $N_{I}^k(\Delta)$}\label{VIk-Delta-sect}
Let $X_\Delta$ be a smooth compact toric variety of dimension $n$.
For any  $I\in S_{\Delta}$, since $S_{\Delta}=\bigcup_{0\leq i\leq n}S(\Delta,i)$,
there exists a unique integer $0\leq i\leq n$, such that $I\in S(\Delta,i)$. 
We denote by $|I_\Delta|$ the integer $i$. Or in other words, 
\begin{equation}\label{|I|-eqn}
 |I_\Delta|=i~~\text{if and only if}~~ I\in S(\Delta,i).
\end{equation}

Suppose that $I\in int(F_I(\Delta))\cap M$. Then by Lemma \ref{SDelta-lem},
we have $F_I(\Delta)\in P_\Delta(n-i)$. 
Suppose that $$E_I(\Delta)=\{e(\alpha_{s_1}),\ldots e(\alpha_{s_l})\},$$ 
where $1\leq s_1<\ldots<s_l\leq r$.
By Lemma \ref{SDelta-lem}, 
$F^{\perp}_I(\Delta)=\sum_{1\leq t\leq l} \R\cdot e(\alpha_{s_t})$
 is a $i$-dimensional subspace of $N_\R$.
No loss of generality, suppose that 
$\{e(\alpha_{s_1}),\ldots,e(\alpha_{s_{i}})\}\subseteq\{e(\alpha_{s_1}),\ldots,e(\alpha_{s_{l}})\}$ 
is a basis of $F^{\perp}_I(\Delta)$, where $i\leq l$.

Let
 \begin{gather*}
 \cE_I(\Delta)=e(\alpha_{s_1}) \wedge\ldots\wedge e(\alpha_{s_i})\in\wedge^{|I_\Delta|} N.
\end{gather*}
Since $\{e(\alpha_{s_1}),\ldots,e(\alpha_{s_i})\}$ is a basis of $F^{\perp}_I(\Delta)$, 
we have
\begin{equation*} 
\wedge^i F^{\perp}_I(\Delta)=\R\cdot\cE_I(\Delta).
\end{equation*}
Thus  $\cE_I(\Delta)$ is well defined up to a scalar if we  choose different  basis 
$\{e(\alpha_{s_1}),\ldots,e(\alpha_{s_i})\}\subseteq E_I(\Delta)$ for $F^{\perp}_I(\Delta)$.

Let $N_I^k(\Delta)$ be a subspace of $\wedge^k N_{\C}$ ($0\leq k\leq n$) defined by
\begin{equation*}
N_I^k(\Delta)=
\begin{cases}
\C\cdot\mathcal{E}_I(\Delta)\wedge  (\wedge^{k-|I_\Delta|} N_\C)\quad 
&\text{for}~ 0<|I_\Delta|\leq k,\\
\wedge^k N_{\C}\quad &\text{for}~|I_\Delta|=0,\\
0\quad &\text{for}~|I_\Delta|>k.
\end{cases}
\end{equation*}

The following lemma gives a description of the vector space $N_I^k(\Delta)$.
\begin{lemma}\label{WDelta-lem}
Let $X_{\Delta}$ be a smooth compact toric variety.
For any $I\in S_\Delta$ and $x\in\wedge^k N_\C$, 
the following statements are equivalent:
\begin{enumerate}
\item 
$x\in N_I^k(\Delta)$,
\item $x\wedge e(\alpha_t)=0$ for all $e(\alpha_t)\in E_I(\Delta)$,
\item $x\wedge y=0$ for all $y\in F^{\perp}_I(\Delta)$.
\end{enumerate}
\end{lemma}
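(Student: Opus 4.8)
The plan is to prove the chain of equivalences $(1)\Leftrightarrow(2)\Leftrightarrow(3)$, exploiting that $F^{\perp}_I(\Delta)$ is by definition spanned by the elements $e(\alpha_{s_t})$ with $e(\alpha_{s_t})\in E_I(\Delta)$, so that wedging against all of $F^{\perp}_I(\Delta)$ is the same as wedging against each generator $e(\alpha_t)\in E_I(\Delta)$. This immediately disposes of $(2)\Leftrightarrow(3)$: if $x\wedge e(\alpha_t)=0$ for each generator, then by bilinearity $x\wedge y=0$ for every $\R$-linear (hence $\C$-linear) combination $y$, and conversely taking $y=e(\alpha_t)$ gives the reverse implication. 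The only mild point to note is that $F^{\perp}_I(\Delta)\subseteq N_\R\subseteq N_\C$, so a priori $y$ ranges over the real span, but since wedging is $\C$-bilinear the statement is unaffected; I would just remark this.

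The substantive equivalence is $(1)\Leftrightarrow(2)$, and I would handle the three cases in the definition of $N_I^k(\Delta)$ separately. When $|I_\Delta|=0$ we have $E_I(\Delta)=\emptyset$ and $F^{\perp}_I(\Delta)=0$, so condition (2) is vacuously true and $N_I^k(\Delta)=\wedge^k N_\C$, making (1) also automatic; so both hold trivially. When $|I_\Delta|>k$, condition (2) forces $x$ to be annihilated (under wedging) by an $i$-dimensional subspace with $i>k$; since $x\in\wedge^k N_\C$, I would argue this forces $x=0$, matching $N_I^k(\Delta)=0$. The clean way to see this: complete a basis $e(\alpha_{s_1}),\ldots,e(\alpha_{s_i})$ of $F^{\perp}_I(\Delta)$ to a basis $f_1,\ldots,f_n$ of $N_\C$ with $f_t=e(\alpha_{s_t})$ for $t\le i$; write $x$ in the induced monomial basis of $\wedge^k N_\C$; then $x\wedge f_t=0$ for each $t\le i$ kills every monomial not containing $f_t$, and ranging over $t=1,\ldots,i$ with $i>k$ forces every monomial (which has degree $k<i$, hence omits at least one $f_t$) to vanish.

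The main case is $0<|I_\Delta|=i\le k$, and this is where I would spend the most care — it is the chief obstacle, though still routine. Using the same adapted basis $f_1,\ldots,f_n$ of $N_\C$ with $\{f_1,\ldots,f_i\}$ a basis of $F^{\perp}_I(\Delta)$, write $x=\sum_{|S|=k} c_S\, f_S$ where $f_S=f_{j_1}\wedge\cdots\wedge f_{j_k}$ for $S=\{j_1<\cdots<j_k\}$. The condition $x\wedge e(\alpha_t)=0$ for all $e(\alpha_t)\in E_I(\Delta)$ is equivalent to $x\wedge f_t=0$ for $t=1,\ldots,i$ (since those $f_t$ span $F^{\perp}_I(\Delta)$, hence span $E_I(\Delta)$ up to linear combination, using $(2)\Leftrightarrow(3)$ already established). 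Now $x\wedge f_t=\sum_{S\not\ni t} \pm c_S\, f_{S\cup\{t\}}=0$ forces $c_S=0$ whenever $t\notin S$. Imposing this for all $t\le i$ forces $c_S=0$ unless $\{1,\ldots,i\}\subseteq S$, i.e. $x$ is a linear combination of monomials of the form $f_1\wedge\cdots\wedge f_i\wedge f_{j_{i+1}}\wedge\cdots\wedge f_{j_k}$, which is exactly to say $x\in\C\cdot\mathcal{E}_I(\Delta)\wedge(\wedge^{k-i}N_\C)=N_I^k(\Delta)$ (recall $\mathcal{E}_I(\Delta)=f_1\wedge\cdots\wedge f_i$ up to scalar). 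Conversely any such $x$ clearly satisfies $x\wedge f_t=0$ for $t\le i$ since $f_t$ already appears. I would write this out with the sign bookkeeping suppressed, noting only that the exterior monomials $\{f_S\}$ form a basis so vanishing of $x\wedge f_t$ can be read off coefficient-wise.
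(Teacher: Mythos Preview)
Your proposal is correct and follows essentially the same approach as the paper's own proof: extend a basis of $F^{\perp}_I(\Delta)$ to a basis of $N_\C$ and compute in the induced monomial basis of $\wedge^k N_\C$, treating the cases $|I_\Delta|\le k$ and $|I_\Delta|>k$ separately. In fact the paper simply writes ``By some computations under the basis, we can prove Lemma~\ref{WDelta-lem}. Here we omit the detail,'' so your write-up is exactly the fleshed-out version of what the paper leaves implicit.
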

\begin{proof}
\begin{enumerate}
\item [(a)]
For $I\in S_\Delta$, in the case of $|I_\Delta|\leq k$, we suppose that $0\leq |I_\Delta|=i\leq k$ and 
 \begin{equation*}
 \cE_I(\Delta)=e(\alpha_{s_1}) \wedge\ldots\wedge e(\alpha_{s_i})\in\wedge^{i} N,
\end{equation*}
where $\{e(\alpha_{s_1}),\ldots,e(\alpha_{s_i})\}$ is a basis of 
$F^{\perp}_I(\Delta)\subseteq N_\C$.

We extend $\{e(\alpha_{s_1}),\ldots,e(\alpha_{s_i})\}$ to be a basis to $N_\C$,
\i.e.,  suppose that $$\{e(\alpha_{s_1}),\ldots,e(\alpha_{s_i}), f_1,\ldots, f_{n-i}\}$$
is a basis of $N_\C$. 
By some computations under the basis, we can prove Lemma \ref{WDelta-lem}. 
Here we omit the detail.
\item [(b)]
For $I\in S_\Delta$, In the case of $|I_\Delta|>k$, we have $N_I^k(\Delta)=0$. 
Lemma \ref{WDelta-lem} can be proved similarly.
\end{enumerate}
\end{proof}

\subsubsection{The vector space $N_{I}^k(\sigma)$}
Let $\sigma=\sum_{t=1}^{n}\R_{\geq 0}e_t(\sigma)$ be a smooth cone 
 of dimension $n$, where $\{e_1(\sigma),\ldots, e_n(\sigma)\}$ is a $\Z$-basis of $N$.
 Let $U_{\sigma}\cong\C^n$ be the affine toric variety associated the cone $\sigma$.
 Then we have $U_{\sigma}\cong\C^n$.
  
For any  $I\in S_{\sigma}$, since $S_{\sigma}=\bigcup_{0\leq j\leq n}S(\sigma,j)$,
there exists a unique integer $0\leq j\leq n$, such that $I\in S(\sigma,j)$. 
We denote by $|I_\sigma|$ the integer $j$, \i.e., $|I_\sigma|=j$.

Suppose that 
$$E_I(\sigma)=\{e_{s_1}(\sigma),\ldots, e_{s_j}(\sigma)\},$$
where $1\leq s_1<\ldots<s_j\leq n$.

Let
 \begin{equation*}
 \cE_I(\sigma)=e_{s_1}(\sigma) \wedge\ldots\wedge e_{s_j}(\sigma)\in\wedge^{|I_\sigma|} N.
\end{equation*}

Let $N_I^k(\sigma)$ be a subspace of $\wedge^k N_\C$ defined by
\begin{equation*}
N_I^k(\sigma)=
\begin{cases}
\C\cdot\mathcal{E}_I(\sigma)\wedge(\wedge^{k-|I_\sigma|} N_\C)
\quad&\text{for}~ 0<|I_\sigma|\leq k,\\
\wedge^k N_{\C}\quad&\text{for}~ |I_\sigma|=0,\\
0\quad&\text{for}~|I_\sigma|>k,
\end{cases}
\end{equation*}
for all $0\leq k\leq n$.

Similar to Lemma \ref{WDelta-lem}, we have
\begin{lemma}\label{Wsigma-lem}
Let $\sigma$ be a smooth cone of dimension $n$ in $N_\R$.
For any $I\in S_\sigma$ and $x\in\wedge^k N_\C$, 
the following statements are equivalent:
\begin{enumerate}
\item
$x\in N_I^k(\sigma)$,
\item
$x\wedge e_t(\sigma)=0$ for all $e_t(\sigma)\in E_I(\sigma)$,
\item 
$x\wedge y=0$ for all $y\in F^{\perp}_I(\sigma)$.
\end{enumerate}
\end{lemma}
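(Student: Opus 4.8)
The statement to prove is Lemma \ref{Wsigma-lem}, which is the local (single-cone) analogue of Lemma \ref{WDelta-lem}. The plan is to reduce everything to an explicit computation in coordinates adapted to the $\Z$-basis $\{e_1(\sigma),\dots,e_n(\sigma)\}$ of $N$, exactly as indicated (but omitted) in the proof of Lemma \ref{WDelta-lem}; since $\sigma$ is smooth of dimension $n$, the generators $e_t(\sigma)$ already form a basis of $N_\C$, so no extension step is needed and the computation is cleaner than in the global case.

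First I would fix $I \in S_\sigma$ and set $j = |I_\sigma|$, so that by definition and by Lemma \ref{Ssigma-lem} we have $E_I(\sigma) = \{e_{s_1}(\sigma),\dots,e_{s_j}(\sigma)\}$, $F^\perp_I(\sigma) = \sum_{t=1}^j \R\cdot e_{s_t}(\sigma)$, and $\cE_I(\sigma) = e_{s_1}(\sigma)\wedge\dots\wedge e_{s_j}(\sigma)$. Reindexing the basis we may assume without loss of generality that $s_t = t$ for $1 \le t \le j$, so $E_I(\sigma) = \{e_1(\sigma),\dots,e_j(\sigma)\}$. Write $v_t = e_t(\sigma)$ for brevity; then $\{v_1,\dots,v_n\}$ is a $\C$-basis of $N_\C$, and $\{v_{a_1}\wedge\dots\wedge v_{a_k} : 1 \le a_1 < \dots < a_k \le n\}$ is the corresponding basis of $\wedge^k N_\C$. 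Expand an arbitrary $x \in \wedge^k N_\C$ in this basis as $x = \sum_{A} c_A\, v_A$ where $A$ runs over $k$-element subsets of $\{1,\dots,n\}$ and $v_A = v_{a_1}\wedge\dots\wedge v_{a_k}$.

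Next I would establish the three equivalences. For (b)$\Leftrightarrow$(c): since $F^\perp_I(\sigma)$ is spanned by the $v_t$ with $1\le t\le j$ and wedging is linear in $y$, the condition $x\wedge y = 0$ for all $y\in F^\perp_I(\sigma)$ is equivalent to $x\wedge v_t = 0$ for $t = 1,\dots,j$, which is exactly (b) (and the empty case $E_I(\sigma)=\emptyset$, $F^\perp_I(\sigma)=0$ is vacuously consistent). For (a)$\Leftrightarrow$(b) in the case $0 < j \le k$: compute that $x \wedge v_t = \sum_{A : t\notin A} \pm c_A\, v_{A\cup\{t\}}$, so $x\wedge v_t = 0$ forces $c_A = 0$ whenever $t\notin A$; running over $t=1,\dots,j$, we get $c_A = 0$ unless $\{1,\dots,j\}\subseteq A$, i.e. $x$ lies in the span of the $v_A$ with $\{1,\dots,j\}\subseteq A$, which is precisely $\C\cdot\cE_I(\sigma)\wedge(\wedge^{k-j} N_\C) = N_I^k(\sigma)$; the converse inclusion is immediate since $\cE_I(\sigma)\wedge v_t$ contains the repeated factor $v_t$ and hence vanishes. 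The case $j = 0$ is trivial (all conditions hold for every $x$, and $N_I^k(\sigma) = \wedge^k N_\C$), and the case $j > k$ needs its own short argument: here $N_I^k(\sigma) = 0$, so I must check that (b) forces $x = 0$; indeed $x\wedge v_t = 0$ for $t=1,\dots,j$ forces, as above, $c_A = 0$ unless $\{1,\dots,j\}\subseteq A$, but no $k$-subset $A$ can contain the $(j)$-set $\{1,\dots,j\}$ when $j > k$, so $x = 0$ — and then (a) and (c) hold trivially as well.

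The argument is essentially a bookkeeping exercise in exterior algebra, so there is no serious obstacle; the only point requiring a little care is the sign/index tracking in the expansion of $x\wedge v_t$ and making sure the $j>k$ degenerate case is handled separately (it is the one place where $N_I^k(\sigma)$ is forced to be $0$ rather than being cut out by a nontrivial condition). Since this lemma is declared to be "similar to Lemma \ref{WDelta-lem}," I would keep the write-up brief, structuring it by the three cases $j = 0$, $0 < j \le k$, and $j > k$, and citing Lemma \ref{Ssigma-lem} for the translation between $|I_\sigma| = j$, $\#E_I(\sigma) = j$, and $\dim F^\perp_I(\sigma) = j$.
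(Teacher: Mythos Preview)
Your proposal is correct and follows exactly the approach the paper indicates: the paper gives no separate proof of Lemma~\ref{Wsigma-lem}, merely declaring it ``similar to Lemma~\ref{WDelta-lem}'' whose own proof is sketched as ``extend to a basis of $N_\C$ and compute, details omitted.'' You have carried out precisely that computation (noting correctly that no extension is needed since $\{e_1(\sigma),\dots,e_n(\sigma)\}$ is already a basis), handled the degenerate cases $j=0$ and $j>k$ cleanly, and your reasoning is sound throughout.
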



\begin{proposition} \label{NIk-lem}
Let $X_{\Delta}$ be a smooth compact toric variety associated with a fan $\Delta$ in $N_{\R}$.
For all $I\in S_\Delta=\bigcap_{\sigma\in\Delta(n)}S_\sigma$,
we have
\begin{equation}\label{capNIk-eqn}
\bigcap_{\sigma\in\Delta(n)}N_I^k(\sigma)=N_I^k(\Delta).
\end{equation} 
Especially, we have
\begin{equation*}
\bigcap_{\sigma\in\Delta(n)}N_I^k(\sigma)=0
\quad\text{for all}~ I\in S_\Delta\backslash S_k(\Delta).
\end{equation*}
\end{proposition}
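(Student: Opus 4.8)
The plan is to prove the identity \eqref{capNIk-eqn} by characterizing membership in each side via the wedge-annihilator condition supplied by Lemmas \ref{WDelta-lem} and \ref{Wsigma-lem}, and then invoking Proposition \ref{PFS-lem}(2) to match the two. Concretely, I would first observe that Lemma \ref{Wsigma-lem} says $x\in N_I^k(\sigma)$ if and only if $x\wedge y=0$ for every $y\in F^{\perp}_I(\sigma)$; hence $x\in\bigcap_{\sigma\in\Delta(n)}N_I^k(\sigma)$ if and only if $x\wedge y=0$ for every $y$ lying in $\sum_{\sigma\in\Delta(n)}F^{\perp}_I(\sigma)$. The point is that annihilating each subspace $F^{\perp}_I(\sigma)$ is the same as annihilating their sum, since $x\wedge(\cdot)$ is linear in the second slot. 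By Proposition \ref{PFS-lem}(2), that sum is exactly $F^{\perp}_I(\Delta)$, so the condition becomes $x\wedge y=0$ for all $y\in F^{\perp}_I(\Delta)$, which by Lemma \ref{WDelta-lem} is precisely $x\in N_I^k(\Delta)$. This gives the equality.

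Next I would derive the "especially" clause. For $I\in S_\Delta\setminus S_k(\Delta)$, Lemma \ref{SDelta-lem}(2) gives $\dim F^{\perp}_I(\Delta)=|I_\Delta|>k$, so by the definition of $N_I^k(\Delta)$ (the case $|I_\Delta|>k$) we have $N_I^k(\Delta)=0$, and then \eqref{capNIk-eqn} forces $\bigcap_{\sigma\in\Delta(n)}N_I^k(\sigma)=0$. One small bookkeeping remark: I should note at the outset that $S_\Delta=\bigcap_{\sigma}S_\sigma$ by Proposition \ref{PFS-lem}(1), so $N_I^k(\sigma)$ is defined for every $\sigma\in\Delta(n)$ whenever $I\in S_\Delta$, making the intersection meaningful.

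The only genuine subtlety — and the step I would write out most carefully — is the equivalence "$x\wedge y=0$ for all $y\in F^{\perp}_I(\sigma)$ for every $\sigma$" $\iff$ "$x\wedge y=0$ for all $y\in\sum_\sigma F^{\perp}_I(\sigma)$". The forward direction is immediate since each $F^{\perp}_I(\sigma)\subseteq\sum_\sigma F^{\perp}_I(\sigma)$; the reverse follows because any $y$ in the sum is a finite $\R$-linear (hence $\C$-linear) combination $y=\sum_\sigma y_\sigma$ with $y_\sigma\in F^{\perp}_I(\sigma)$, and $x\wedge y=\sum_\sigma x\wedge y_\sigma=0$. This is routine, but it is the load-bearing observation; everything else is a direct translation through the already-established lemmas. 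I do not anticipate any real obstacle here — the work was done in Proposition \ref{PFS-lem}(2) — so the proof should be short.
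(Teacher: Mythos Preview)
Your proof is correct and follows essentially the same approach as the paper: both reduce membership in the intersection to a wedge-annihilator condition via Lemmas \ref{WDelta-lem} and \ref{Wsigma-lem}, then identify the global and local conditions using the relation between $E_I(\Delta)$ (respectively $F^{\perp}_I(\Delta)$) and its local pieces. The only cosmetic difference is that the paper uses characterization (2) of those lemmas together with $E_I(\Delta)=\bigcup_\sigma E_I(\sigma)$, whereas you use characterization (3) together with Proposition \ref{PFS-lem}(2); also, in your final paragraph the labels ``forward'' and ``reverse'' are swapped (the inclusion argument proves $\Leftarrow$ and the linearity argument proves $\Rightarrow$), though the mathematics is fine.
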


\begin{proof}
\begin{enumerate}
\item
Since $|\Delta|=\bigcup_{\sigma\in\Delta(n)}\sigma$, we have 
\begin{equation*}
E_I(\Delta)=\bigcup_{\sigma\in\Delta(n)} E_I(\sigma).
\end{equation*}
By Lemma \ref{Wsigma-lem}, $x\in N_I^k(\sigma)$ if and only if 
$x\wedge e(\alpha_t)=0$ for all $e(\alpha_t)\in E_I(\sigma)$.
 As a consequence, 
$$x\in\bigcap_{\sigma\in\Delta(n)}N_I^k(\sigma)$$
 if and only if
$$x\wedge e(\alpha_t)=0$$
 for all $e(\alpha_t)\in \bigcup_{\sigma\in\Delta(n)}E_I(\sigma)=E_I(\Delta)$.
 By Lemma \ref{WDelta-lem},  
 $$x\wedge e(\alpha_t)=0$$ for all $e(\alpha_t)\in E_I(\Delta)$ if and only if 
 $$x\in N_I^k(\Delta).$$
 Therefore, we have
 \begin{equation}\label{NIk-lem-eqn1}
 \bigcap_{\sigma\in\Delta(n)}N_I^k(\sigma)=N_I^k(\Delta).
 \end{equation}
 \item
 For any $I\in S_\Delta\backslash S_k(\Delta)$, we have $|I_\Delta|>k$, 
 which implies $N_I^k(\Delta)=0$. By Equation \eqref{NIk-lem-eqn1},
 we get $$\bigcap_{\sigma\in\Delta(n)}N_I^k(\sigma)=0$$
  for all $I\in S_\Delta\backslash S_k(\Delta)$.
  \end{enumerate}
\end{proof}

\begin{remark}
By Proposition \ref{NIk-lem},
$N_I^k(\Delta)$ is a subspace of $N_I^k(\sigma)$ for any $\sigma\in\Delta(n)$.
\end{remark}

\section{Holomorphic polyvector fields on toric varieties}\label{sect-multi}

\subsection{ The weight space $V_I^k$ for smooth toric varieties}
\begin{lemma}\label{VIk-lem}
Let $X$ be a smooth toric variety.
Let $V_I^k$ be the weight space corresponding to the character $I\in M$ for the 
$T$-action on $H^0(X,\wedge^k T_X)$.
\begin{enumerate}
\item We have $V_0^k=W^k$, where $V_0^k$ is the vector space consisting of all $T$-invariant holomorphic $k$-vector fields on $X$.
\item
For any holomorphic $k$-vector field $v\in V_{-I}^k$, there exists a unique 
 $k$-vector field $w\in W^k$, such that
\begin{equation}\label{VIKlem-eqn}
v|_T=\chi^I\cdot w|_T,
\end{equation}
where $v|_T$ and $\chi^I\cdot w|_T$ are the restrictions of $v$ and $\chi^I\cdot w$ on $T\subseteq X$.
\end{enumerate}
\end{lemma}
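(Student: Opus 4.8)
The plan is to analyze the $T$-action on $H^0(X,\wedge^k T_X)$ by first understanding the structure of the torus $T\subseteq X$ and the polyvector fields on it, then descending to global statements. Recall that $T\cong(\C^*)^n$ has a canonical trivialization of its tangent bundle: the vector fields $\rho(e_1),\ldots,\rho(e_n)$ coming from a $\Z$-basis $\{e_1,\ldots,e_n\}$ of $N$ form a global frame for $T_T$. Consequently $H^0(T,\wedge^k T_T)$ is a free module over the ring of regular functions $\C[M]=\bigoplus_{m\in M}\C\chi^m$, with basis $\{\rho(e_{j_1})\wedge\cdots\wedge\rho(e_{j_k})\mid j_1<\cdots<j_k\}$, i.e. a basis of $W^k$. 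Thus every holomorphic $k$-vector field on $T$ is uniquely of the form $\sum_{m\in M}\chi^m\cdot w_m$ with $w_m\in W^k$, and the $T$-action scales $\chi^m\cdot w$ (with $w\in W^k$ fixed by $T$) by the character $-m$, since $W^k$ is $T$-invariant and $t\cdot\chi^m = \chi^m(t)\inv\chi^m$ under the standard conventions for the action on functions. (I would double-check the sign convention against the normalization $\rho(x)(\chi^m)=\langle x,m\rangle\chi^m$; this is exactly the point flagged in the remark about \cite{Hong 19}, so care is warranted.)

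For part (1), the inclusion $W^k\subseteq V_0^k$ is immediate: each element of $W^k$ is a global holomorphic $k$-vector field (stated in the excerpt after \eqref{rho-def-eqn2}) and is $T$-invariant because $W^k=\wedge^k\rho(N_\C)$ is built from the infinitesimal $T$-action, which commutes with the $T$-action itself. For the reverse inclusion, take $v\in V_0^k$; its restriction $v|_T$ is a $T$-invariant holomorphic $k$-vector field on $T$, hence of the form $\sum_m\chi^m w_m$, and $T$-invariance forces $w_m=0$ for $m\neq 0$, so $v|_T=w_0\in W^k$. Since $T$ is Zariski-dense in $X$ and $X$ is a variety (in particular reduced and irreducible), a holomorphic section of $\wedge^k T_X$ is determined by its restriction to $T$; therefore $v=w_0\in W^k$ as a global section. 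This gives $V_0^k=W^k$.

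For part (2), let $v\in V_{-I}^k$. Restrict to $T$: $v|_T\in H^0(T,\wedge^k T_T)$ lies in the weight space for the character $-I$ of the $T$-action. By the module description above, $v|_T=\sum_{m}\chi^m w_m$ with $w_m\in W^k$, and the weight-$(-I)$ condition isolates the single term $m=I$, giving $v|_T=\chi^I\cdot w$ with $w:=w_I\in W^k$. Uniqueness of $w$ follows from the freeness of $H^0(T,\wedge^k T_T)$ over $\C[M]$ (equivalently, $\chi^I$ is a nonzero regular function on $T$ and $W^k\subseteq H^0(T,\wedge^k T_T)$ maps injectively, so $\chi^I\cdot w$ determines $w$), together with density of $T$ in $X$ again if one wants uniqueness as a statement about the original $v$.

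The main obstacle, and the step deserving the most care, is pinning down the $T$-equivariant structure on $H^0(T,\wedge^k T_T)$ precisely — both the free-module decomposition and, especially, the correct sign/normalization relating the weight label to the monomial $\chi^m$ versus $\chi^{-m}$. Everything else (invariance of $W^k$, density of $T$, extension of sections from $T$ to $X$) is standard toric-variety bookkeeping. I would therefore spend the bulk of the write-up making the $\C[M]$-module identification $H^0(T,\wedge^k T_T)\cong W^k\otimes_\C\C[M]$ explicit and verifying that under it the weight-$J$ subspace is exactly $\chi^{-J}\cdot W^k$, so that $V_{-I}^k|_T=\chi^{I}\cdot W^k$ as claimed.
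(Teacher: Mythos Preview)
Your proposal is correct and follows essentially the same approach as the paper: both arguments hinge on identifying the $T$-invariant holomorphic $k$-vector fields on $T$ with $W^k$ and then using density of $T$ in $X$. The only presentational difference is that for part~(2) you expand $v|_T$ in the free $\C[M]$-module decomposition $H^0(T,\wedge^k T_T)\cong W^k\otimes_\C\C[M]$ and project onto the weight-$(-I)$ summand, whereas the paper multiplies $v|_T$ by $\chi^{-I}$, checks directly that the result is $T$-invariant, and then invokes part~(1); these are two ways of saying the same thing.
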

\begin{proof}
\begin{enumerate}
\item
 
For any holomorphic $T$-invariant $k$-vector field $v\in V_0^k$, the restriction of $v$ 
on $T\subseteq X$ is also $T$-invariant. 
Since the $T$-invariant $k$-vector fields on $T$ can be identified as the elements in 
$\wedge^k\Lie(T)\cong\wedge^N_{\C}$, there exists an element 
$x\in\wedge^k N_{\C}$, such that  
\begin{equation}
v|_{T}=\rho(x)|_{T}.
\end{equation}
Since the $k$-vector fields $v$ and $\rho(x)$ are holomorphic on $X$, 
and $T$ is dense in $X$, 
we have $$v=\rho(x)\in W^k$$ on $X$. 
Hence we obtain $V_0^k\subseteq W$.

On the other hand, any $w\in W^k$ is a $T$-invariant holomorphic $k$-vector field 
on $X$. Hence we have $W^k\subseteq V_0^k$.

By the arguments above, we get $V_0^k=W^k$ .

\item 
For any holomorphic $k$-vector field $v\in V_{-I}^k$, we have 
\begin{equation}\label{VIklem-eqn1}
 t_{*}(v)=\chi^{-I}(t)\cdot v
\end{equation}
for all $t\in T$, where $t_{*}(v)$ is the induced action of $t\in T$ 
on the $k$-vector field $v$.

Next we prove that $\chi^{-I}\cdot v|_T$ is a $T$-invariant holomorphic $k$-vector field 
on $T$. 
For any $t\in T$ and any point $p\in T\subseteq X$, we have
\begin{equation}\label{VIklem-eqn2}
(t_{*}(\chi^{-I}\cdot v))(p)=\chi^{-I}(t^{-1}\cdot p)\cdot (t_{*}(v))(p).
\end{equation}
By Equation \eqref{VIklem-eqn1}, we have
\begin{equation*}
( t_{*}(v))(p)=\chi^{-I}(t)\cdot v(p).
\end{equation*}
On the other hand, we have
\begin{equation*}
\chi^{-I}(t^{-1}\cdot p)=\chi^{-I}(t^{-1})\chi^{-I}(p)=\chi^{I}(t)\chi^{-I}(p).
\end{equation*}
Thus Equation \eqref{VIklem-eqn2} shows us
$$t_{*}(\chi^{-I}\cdot v)(p)=\chi^{-I}(p)\cdot v(p)=(\chi^{-I}\cdot v)(p)$$
for all $t\in T$ and $p\in T\subseteq X$.
Hence $\chi^{-I}\cdot v|_T$ is a $T$-invariant holomorphic $k$-vector field on $T$.

As a consequence, there exists $w\in W^k$ such that 
$$w|_T=\chi^{-I}\cdot v|_T,$$
or equivalently, $$v|_T=\chi^I\cdot w|_T.$$
\end{enumerate}
\end{proof}

By Lemma \ref{VIk-lem}, any holomorphic $k$-vector fields $v\in V_{-I}^k$ can be written as $v=\chi^I\cdot w$ on $T\subseteq X$, where $I\in M$ and $w\in W^k$.
 In general, $v=\chi^I\cdot w$ is a meromorphic $k$-vector field on $X$ for $I\in M$ and $w\in W^k$. If $v=\chi^I\cdot w$ has moveable singularity on $X$, by abuse of notations,
 we also use $v=\chi^I\cdot w$ to represent the corresponding holomorphic $k$-vector field on $X$,  and we say that $v=\chi^I\cdot w$ is holomorphic on $X$ in this paper.
 
 We have the following lemma.
\begin{lemma}\label{VIk-lem1}
Let $X$ be a smooth toric variety.  Let $v=\chi^I\cdot w$ be a meromorphic $k$-vector field on $X$, where $I\in M$ and $w\in W^k$. Then $v=\chi^I\cdot w\in V_{-I}^k$ 
if and only if $v=\chi^I\cdot w$ is holomorphic on $X$.
\end{lemma}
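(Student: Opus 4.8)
The plan is to prove Lemma \ref{VIk-lem1} as a two-directional statement, leveraging Lemma \ref{VIk-lem} for one direction and a density/holomorphy argument for the other. The forward direction is essentially immediate: if $v=\chi^I\cdot w$ is an element of $V_{-I}^k$, then by definition $V_{-I}^k\subseteq H^0(X,\wedge^kT_X)$, so $v$ is a genuine holomorphic $k$-vector field on $X$; there is nothing more to check except to recall our convention that writing $v=\chi^I\cdot w$ for the holomorphic field means this meromorphic expression has movable singularities, which is precisely the situation being assumed.

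For the reverse direction, suppose $v=\chi^I\cdot w$ is holomorphic on $X$ (i.e., the meromorphic expression has only movable singularities and thus represents a global section of $\wedge^kT_X$). First I would verify that $v$ lies in the weight space $V_{-I}^k$ by a direct computation of the $T$-action. For $t\in T$ and $p\in T\subseteq X$, I compute $t_*(v)(p)=t_*(\chi^I\cdot w)(p)=\chi^I(t^{-1}p)\,t_*(w)(p)$. Since $w\in W^k=\wedge^k\rho(N_\C)$ consists of $T$-invariant $k$-vector fields (this is part (1) of Lemma \ref{VIk-lem}, or just the fact that $\rho$ comes from the infinitesimal action of $T$ on itself by translation, which is $T$-invariant), we have $t_*(w)=w$. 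And $\chi^I(t^{-1}p)=\chi^I(t)^{-1}\chi^I(p)=\chi^{-I}(t)\chi^I(p)$, so $t_*(v)(p)=\chi^{-I}(t)\cdot\chi^I(p)w(p)=\chi^{-I}(t)\cdot v(p)$. Thus $t_*(v)=\chi^{-I}(t)\cdot v$ holds on the dense open set $T$, and since $v$ and $t_*(v)$ are both holomorphic on all of $X$, the identity $t_*(v)=\chi^{-I}(t)\cdot v$ extends to $X$ by density. This says exactly that $v\in V_{-I}^k$.

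The main subtlety — and the step I expect to require the most care — is the interplay between "holomorphic" in the literal sense (a global section of $\wedge^kT_X$) and the bookkeeping convention introduced just before the lemma, whereby a meromorphic expression $\chi^I\cdot w$ with movable singularities is identified with the unique holomorphic field agreeing with it on $T$. One must be sure that when $v=\chi^I\cdot w$ "is holomorphic on $X$," the object we feed into the $T$-action computation is that honest holomorphic extension, not the naive meromorphic symbol; the two agree on the dense torus $T$, so the computation above applies verbatim, but this identification should be stated cleanly. Conversely, starting from $v\in V_{-I}^k$, part (2) of Lemma \ref{VIk-lem} already produces the $w\in W^k$ with $v|_T=\chi^I\cdot w|_T$, so the two formulations of "$v=\chi^I\cdot w$" are consistent. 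Assembling these observations gives the equivalence, and I would close by noting that the decomposition \eqref{VIk-eqn} together with this lemma reduces the computation of $H^0(X,\wedge^kT_X)$ to determining, for each $I\in M$, whether $\chi^I\cdot w$ is holomorphic — which is the combinatorial problem taken up in the remof the section.
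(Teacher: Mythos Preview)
Your proposal is correct and follows essentially the same approach as the paper: the forward direction is immediate from $V_{-I}^k\subseteq H^0(X,\wedge^kT_X)$, and the reverse direction is the $T$-action computation on the dense torus (using $T$-invariance of $w$ and multiplicativity of $\chi^I$) followed by extension of the identity $t_*(v)=\chi^{-I}(t)\cdot v$ to all of $X$ by holomorphy and density. Your additional remarks on the movable-singularity convention are accurate and make explicit what the paper leaves implicit.
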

\begin{proof}
\begin{enumerate}
\item
$``\Longleftarrow"$ \quad For any $w\in W^k$ and $v=\chi^I\cdot w$, 
suppose that $v$ is holomorphic on $X$. We will show that $v\in V_{-I}^k$.

By the similar proof as in Lemma \ref{VIk-lem}, we get
\begin{equation}\label{VIklem-eqn3}
t_*(v)|_T=t_*(\chi^I\cdot w)|_T=\chi^{-I}(t)(\chi^I\cdot w)|_T=\chi^{-I}(t)\cdot v|_T
\end{equation}
for all $t\in T$.
If $v=\chi^I\cdot w$ is holomorphic on $X$, 
then $t_*(v)$ and $\chi^{-I}(t)\cdot v$ are both holomorphic on $X$.
Since $T$ is dense in $X$,  the Equation \eqref{VIklem-eqn3} implies that
\begin{equation}
t_*(v)=\chi^{-I}(t)\cdot v
\end{equation}
for all $t\in T$. Therefore we have $v\in V_{-I}^k$.

\item
$``\Longrightarrow"$ \quad If 
$v=\chi^I\cdot w\in V_{-I}^k\subseteq H^0(X,\wedge^k T_X)$, 
then $v$ is necessarily holomorphic on $X$.
\end{enumerate}
\end{proof}

\subsection{Holomorphic polyvector fields on $U_\sigma$}
Let $\sigma$ be a smooth cone of dimension $n$ in $N_\R$.
Let $U_{\sigma}\cong\C^n$ be the affine variety associated with $\sigma$. 

The differential forms on affine toric varieties has been studied by Danilov 
in \cite{Danilov}. 
Here we discuss the holomorphic polyvector fields on $U_\sigma$. 
The methods we use are essentially similar with \cite{Danilov}.

Let 
\begin{equation*}
W(\sigma)=\{w|_{U_\sigma}\mid w\in W\},
\end{equation*}
where $w|_{U_\sigma}$ is the restriction of the holomorphic vector field $w$ 
on $U_\sigma$.
Let
\begin{equation*}
W^k(\sigma)=\{w|_{U_\sigma}\mid w\in W^k\}.
\end{equation*}
Then $W^k(\sigma)$ is a subspace of $H^0(U_\sigma,\wedge^kT_{U_\sigma})$.
And we have $W^k(\sigma)=\wedge^k W(\sigma)$ $(0\leq k\leq n)$.

For any $I\in S_\sigma$, let
\begin{equation*}
\mathcal{V}_I(\sigma)=\rho(\cE_I(\sigma))|_{U_\sigma}\in W^{|I_\sigma|}(\sigma).
\end{equation*}
And let
\begin{equation}\label{WIkSigma-eqn}
W_I^k(\sigma)=\rho(N_I^k(\sigma))|_{U_\sigma}=
\begin{cases}
\C\cdot\mathcal{V}_I(\sigma)\wedge  W^{k-|I_\sigma|}(\sigma)
\quad&\text{for}~ 0<|I_\sigma|\leq k,\\
\wedge^k N_{\C}\quad&\text{for}~ |I_\sigma|=0,\\
0\quad &\text{for}~|I_\sigma|>k,
\end{cases}
\end{equation}
for all $0\leq k\leq n$. 
Then $W_I^k(\sigma)$ is a subspace of $W^k(\sigma)$. 

Suppose that 
$$\sigma=\sum_{t=1}^{n} \R_{\geq 0}\cdot e_t(\sigma),$$ 
where $\{e_1(\sigma),\ldots, e_n(\sigma)\}$ is a $\Z$-basis of $N$. 
Let $\{e_1^*(\sigma),\ldots, e_n^*(\sigma)\}\subset M$ be the dual basis of 
$\{e_1(\sigma),\ldots, e_n(\sigma)\}$,
and let $z_t=\chi^{e_t^*(\sigma)}$ $(1\leq t\leq n)$ be the affine coordinates on $U_\sigma\cong\C^n$. Then we have 
$$\rho(e_t(\sigma))=z_t\frac{\partial}{\partial z_t}$$
for $1\leq t\leq n$. Let $v_t=\rho(e_t(\sigma))$, $1\leq t\leq n$. Then we have
$$W(\sigma)=\{\sum_{t=1}^n a_t v_t \mid a_t\in\C\}.$$
Suppose $I=\sum_{t=1}^{n}m_t e_{t}^{*}\in M$. Then 
$$\chi^I=z_1^{m_1}\ldots z_n^{m_n}.$$
Let $$E_I(\sigma)=\{ e_{s_l}(\sigma)\mid m_{s_l}=-1,~1\leq l\leq r, ~1\leq s_1<\ldots <s_j\leq n\}.$$
Then we have $|I_{\sigma}|=j$
and 
$$\mathcal{V}_I(\sigma)=v_{s_1}\wedge\ldots\wedge v_{s_j}.$$
Moreover, we have
$$S_{\sigma}=\{I=\sum_{i=1}^{n}m_ie_i^*\in M\mid m_i\geq-1\}$$
and 
$$S_k(\sigma)=\{I=\sum_{i=1}^{n}m_ie_i^*\in M\mid m_i\geq-1, |I_\sigma|\leq k\}.$$

By Lemma $5.1$ in \cite{Hong 19}, we have
\begin{lemma}\label{VIksigma-lem1}
Let $\sigma$ be a smooth cone of dimension $n$ in $N_\R$.
Let $v=\chi^I\cdot w$ be a $k$-vector field on $U_\sigma$,
where $I\in M$ and $w\in W^k(\sigma)~ (w\neq 0)$. 
Then $v=\chi^I\cdot w$ is holomorphic on $U_\sigma$ 
if and only if 
\begin{equation*}
I\in S_k(\sigma)\quad\text{and}\quad w\in W_I^k(\sigma).
\end{equation*}
\end{lemma}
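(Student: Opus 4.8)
Because $U_\sigma \cong \C^n$ with the explicit coordinates $z_1,\dots,z_n$, everything can be made completely concrete, and the strategy is to reduce the statement to the monomial case and use a direct computation of when a monomial $k$-vector field on $\C^n$ extends holomorphically across the coordinate hyperplanes. Write $I = \sum_{t=1}^n m_t e_t^*$, so that $\chi^I = z_1^{m_1}\cdots z_n^{m_n}$, and recall $v_t = \rho(e_t(\sigma)) = z_t\,\partial/\partial z_t$. A general element $w \in W^k(\sigma) = \wedge^k W(\sigma)$ is a constant-coefficient combination of the wedge monomials $v_S := v_{t_1}\wedge\cdots\wedge v_{t_k}$ for $S = \{t_1<\dots<t_k\}\subseteq\{1,\dots,n\}$, and
\[
\chi^I\cdot v_S \;=\; z_1^{m_1}\cdots z_n^{m_n}\cdot z_{t_1}\frac{\partial}{\partial z_{t_1}}\wedge\cdots\wedge z_{t_k}\frac{\partial}{\partial z_{t_k}}
\;=\; \Big(\prod_{t\notin S} z_t^{m_t}\Big)\Big(\prod_{t\in S} z_t^{m_t+1}\Big)\,\frac{\partial}{\partial z_{t_1}}\wedge\cdots\wedge\frac{\partial}{\partial z_{t_k}}.
\]
So $\chi^I\cdot v_S$ is holomorphic on $\C^n$ exactly when $m_t\ge 0$ for all $t\notin S$ and $m_t\ge -1$ for all $t\in S$. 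In particular a \emph{necessary} condition for any nonzero $w$ to give a holomorphic $\chi^I\cdot w$ is $m_t\ge -1$ for all $t$, i.e. $I\in S_\sigma$; and then $E_I(\sigma) = \{t : m_t = -1\}$, $|I_\sigma| = \#E_I(\sigma) =: j$.

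\textbf{Key steps.} First I would dispose of the case $|I_\sigma| > k$: if $j > k$ then for every $S$ with $\#S = k$ there is some index $t_0\in E_I(\sigma)\setminus S$, so the exponent of $z_{t_0}$ in $\chi^I\cdot v_S$ is $m_{t_0} = -1 < 0$, hence $\chi^I\cdot v_S$ has a genuine pole along $\{z_{t_0}=0\}$; since distinct $v_S$ involve distinct $\partial/\partial z$-monomials, no cancellation among them is possible, so the only holomorphic combination is $0$ — consistent with $W_I^k(\sigma) = 0$. Second, assume $I\in S_k(\sigma)$, i.e. $j\le k$, and write $E_I(\sigma) = \{s_1<\dots<s_j\}$. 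For a monomial $\chi^I\cdot v_S$ to be holomorphic we need $m_t\ge 0$ whenever $t\notin S$; since $m_{s_l} = -1$ for each $l$, this forces $\{s_1,\dots,s_j\}\subseteq S$, i.e. $\mathcal{E}_I(\sigma) = v_{s_1}\wedge\cdots\wedge v_{s_j}$ divides $v_S$. Conversely, if $\{s_1,\dots,s_j\}\subseteq S$ then every exponent in $\chi^I\cdot v_S$ is $\ge 0$ (exponents of $z_{s_l}$ become $m_{s_l}+1 = 0$, and for $t\notin E_I(\sigma)$ we have $m_t\ge 0$), so $\chi^I\cdot v_S$ is holomorphic. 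Third, I would invoke the no-cancellation observation once more: since $\{\partial/\partial z_{t_1}\wedge\cdots\wedge\partial/\partial z_{t_k}\}_S$ is a $C[z]$-basis of the free module of polynomial $k$-vector fields on $\C^n$, a combination $\sum_S a_S\,\chi^I\cdot v_S$ ($a_S\in\C$) is holomorphic if and only if each term $\chi^I\cdot v_S$ with $a_S\ne 0$ is holomorphic. Hence $\chi^I\cdot w$ is holomorphic iff $w$ lies in the span of those $v_S$ with $\mathcal{E}_I(\sigma)\mid v_S$, which is precisely $\C\cdot\mathcal{V}_I(\sigma)\wedge W^{k-j}(\sigma) = W_I^k(\sigma)$ (and for $j=0$ this span is all of $W^k(\sigma)$, matching the definition). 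Putting the three steps together gives the equivalence.

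\textbf{Alternative framing and the main obstacle.} Equivalently, and perhaps cleaner to write, one identifies the condition ``$\mathcal{E}_I(\sigma)$ divides $v_S$ in $\wedge W(\sigma)$'' with ``$v_S\wedge e_t(\sigma) = 0$ in $\wedge^{k+1} N_\C$ for all $e_t(\sigma)\in E_I(\sigma)$'' and then quotes Lemma \ref{Wsigma-lem} to rewrite this as $\rho^{-1}(w)\in N_I^k(\sigma)$, i.e. $w\in W_I^k(\sigma)$; this keeps the bookkeeping in $\wedge^k N_\C$ rather than in coordinates. Either way, the honest content is entirely in the first paragraph's monomial computation and in the linear-independence/no-cancellation argument; there is essentially no hard step, only the need to be careful about the two boundary cases ($j = 0$ and $j > k$) and about the fact that the $v_S$ with differing index sets cannot cancel poles against one another. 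Indeed the paper states this as a one-line consequence of Lemma 5.1 in \cite{Hong 19}, so the ``proof'' is really a matter of checking that the $\CP^n$ argument there transports verbatim to a general smooth $n$-cone once one fixes the coordinates $z_t = \chi^{e_t^*(\sigma)}$; the only mild subtlety is that on $\CP^n$ there is a homogeneity/Euler-vector-field relation absent here, so I would double-check that the cited lemma's proof does not secretly use it, and if it does, replace that step with the direct computation above.
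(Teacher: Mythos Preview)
Your argument is correct. The paper does not give its own proof but simply cites Lemma~5.1 of \cite{Hong 19}; your direct coordinate computation on $U_\sigma\cong\C^n$ is precisely that argument (the affine charts of $\CP^n$ are already smooth $n$-cones, so nothing needs adapting, and your Euler-relation worry is moot since the cited lemma is already stated and proved on the chart $\C^n$, not on $\CP^n$ itself). One notational slip: where you write $\mathcal{E}_I(\sigma)=v_{s_1}\wedge\cdots\wedge v_{s_j}$, the paper reserves $\mathcal{E}_I(\sigma)$ for the element of $\wedge^j N$ and writes $\mathcal{V}_I(\sigma)=\rho(\mathcal{E}_I(\sigma))$ for its image in $W^j(\sigma)$.
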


Let
\begin{equation*}
V_I^k(\sigma)=\chi^I\cdot W_I^k(\sigma)=\{\chi^I\cdot w\mid w\in W^k(\sigma)\}.
\end{equation*}
By Lemma \ref{VIksigma-lem1}, for any $I\in S_k(\sigma)$,
elements in $V_I^k(\sigma)$ are holomorphic $k$-vector fields on $X$.
Hence $V_I^k(\sigma)$ can be considered a subspace of $H^0(U_\sigma,\wedge^kT_{U_\sigma})$ for any $I\in S_k(\sigma)$.

For a smooth cone $\sigma$ of dimension $n$ in $N_{\R}$, 
the $T$-action on $U_\sigma$ induces a $T$-action on 
$H^0(U_\sigma,\wedge^kT_{U_\sigma})$.
We denote by $V_I^k(U_\sigma)$ the weight space corresponding to the character 
$I\in M$. 

The following lemma also comes from Lemma $5.1$ in \cite{Hong 19}.
\begin{lemma}\label{VIksigma-lem}
Let $\sigma$ be a smooth cone of dimension $n$ in $N_{\R}$. 
Let $U_\sigma\cong\C^n$ be the affine toric variety associated with the cone $\sigma$.
Then we have
\begin{equation*}
V_{-I}^k(U_\sigma)=
\begin{cases}
V_I^k(\sigma)\quad &\text{for all}\quad I\in S_k(\sigma),\\
0\quad &\text{for all}\quad I\in M\backslash S_k(\sigma).
\end{cases}
\end{equation*}
\end{lemma}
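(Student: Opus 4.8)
The plan is to deduce the statement from Lemma~\ref{VIksigma-lem1} (which characterizes when a single term $\chi^I\cdot w$ is holomorphic on $U_\sigma$) together with Lemmas~\ref{VIk-lem} and~\ref{VIk-lem1}, all of which are available because $U_\sigma\cong\C^n$ is itself a smooth toric variety. The two inputs are: the weight-$(-I)$ subspace of $H^0(U_\sigma,\wedge^kT_{U_\sigma})$ consists exactly of the holomorphic fields of the form $\chi^I\cdot w$ with $w\in W^k(\sigma)$; and, among those, holomorphy forces $I\in S_k(\sigma)$ and $w\in W_I^k(\sigma)$.

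First I would note that $U_\sigma$ is a smooth toric variety, so Lemma~\ref{VIk-lem}(2) applies with $X=U_\sigma$ and $W^k=W^k(\sigma)$: for any $I\in M$ and any $v\in V_{-I}^k(U_\sigma)$ there is a unique $w\in W^k(\sigma)$ with $v|_T=\chi^I\cdot w|_T$. Since $T$ is dense in $U_\sigma$ and $v$ is holomorphic, the meromorphic $k$-vector field $\chi^I\cdot w$ on $U_\sigma$ agrees with $v$, so $\chi^I\cdot w$ has movable singularities and represents $v$. Conversely, Lemma~\ref{VIk-lem1} (again with $X=U_\sigma$) says that whenever $w\in W^k(\sigma)$ and $\chi^I\cdot w$ is holomorphic on $U_\sigma$, then $\chi^I\cdot w\in V_{-I}^k(U_\sigma)$. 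Hence $V_{-I}^k(U_\sigma)=\{\chi^I\cdot w\mid w\in W^k(\sigma),\ \chi^I\cdot w\ \text{holomorphic on}\ U_\sigma\}$.

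Now I would invoke Lemma~\ref{VIksigma-lem1}: for $w\in W^k(\sigma)$ with $w\neq 0$, the field $\chi^I\cdot w$ is holomorphic on $U_\sigma$ exactly when $I\in S_k(\sigma)$ and $w\in W_I^k(\sigma)$. Plugging this into the description of $V_{-I}^k(U_\sigma)$ just obtained: if $I\in S_k(\sigma)$ then $V_{-I}^k(U_\sigma)=\chi^I\cdot W_I^k(\sigma)=V_I^k(\sigma)$, and if $I\in M\setminus S_k(\sigma)$ then no nonzero $w$ makes $\chi^I\cdot w$ holomorphic, so $V_{-I}^k(U_\sigma)=0$. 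This is the asserted formula.

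This argument is mostly bookkeeping, so there is no genuine obstacle; the points that need care are (i) checking that Lemmas~\ref{VIk-lem}, \ref{VIk-lem1}, \ref{VIksigma-lem1} really do apply here, which comes down to the fact that $U_\sigma$ is a smooth (non-compact) toric variety and that $H^0(U_\sigma,\wedge^kT_{U_\sigma})$ still decomposes as a direct sum of $T$-weight spaces in this affine case, and (ii) keeping the sign conventions consistent so that the weight appearing is $-I$ and not $I$ --- this is exactly the point at which the sign slip in \cite{Hong 19} occurred. As a fully self-contained alternative one can work directly on $U_\sigma\cong\C^n$ with coordinates $z_t=\chi^{e_t^*(\sigma)}$: expand a holomorphic $k$-vector field in the monomials $z^a\,\partial_{z_{s_1}}\wedge\cdots\wedge\partial_{z_{s_k}}$, compute that such a monomial has $T$-weight $\sum_l e_{s_l}^*(\sigma)-a$ and equals $\chi^I\cdot(v_{s_1}\wedge\cdots\wedge v_{s_k})$ with $v_t=z_t\partial_{z_t}$ and $I=a-\sum_l e_{s_l}^*(\sigma)$, and then read off the holomorphy condition $a\geq 0$ directly in terms of $E_I(\sigma)$ and $|I_\sigma|$.
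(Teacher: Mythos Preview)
Your argument is correct. The paper itself does not prove this lemma; it simply records it as another consequence of Lemma~5.1 in \cite{Hong 19}, the same external result already invoked for Lemma~\ref{VIksigma-lem1}. What you have done instead is give an internal derivation: once Lemma~\ref{VIksigma-lem1} is accepted, you combine it with Lemmas~\ref{VIk-lem} and~\ref{VIk-lem1} (which are stated for arbitrary smooth toric varieties, hence apply to $U_\sigma$) to identify $V_{-I}^k(U_\sigma)$ with $\chi^I\cdot W_I^k(\sigma)$ or $0$. This is a genuine, if modest, improvement in presentation, since it makes the present lemma a formal consequence of material already in the paper rather than a second appeal to the external reference. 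Your coordinate-based alternative at the end is presumably closer in spirit to what \cite{Hong 19} actually does. The only point to be slightly careful about, which you flag yourself, is that $H^0(U_\sigma,\wedge^kT_{U_\sigma})$ is infinite-dimensional, so the existence of the weight decomposition is not automatic from finite-dimensional representation theory; it follows because in the coordinates $z_t=\chi^{e_t^*(\sigma)}$ every holomorphic $k$-vector field on $\C^n$ has a convergent monomial expansion, and each monomial term is a weight vector.
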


\subsection{Holomorphic polyvector fields on $X_\Delta$}
Let $X=X_\Delta$ be a smooth compact toric variety associated with a fan $\Delta$ in $N_\R\cong\R^n$. Recall that $X_\Delta$ is smooth if and only if each cone 
$\sigma\subset\Delta$ is smooth, \i.e., $\sigma$ is generated by a subset of a basis of $N$. And $X_{\Delta}$ is compact if and only if $$|\Delta|=\bigcup_{\sigma\in\Delta}\sigma=N_{\R}.$$
Let $U_{\sigma}\subseteq X_{\Delta}$ be the affine variety associated with a cone 
$\sigma\in\Delta(n)$. Then we have $X=\bigcup_{\sigma\in\Delta(n)} U_{\sigma}$.
As $X$ is smooth, we have $U_\sigma\cong\C^n$ for all $\sigma\in\Delta(n)$.
Moreover, we have $T\subset U_\sigma\subset X$ for all $\sigma\in\Delta(n)$.

\begin{lemma}\label{weightspace-lem1}
Let $X=X_\Delta$ be a compact smooth toric variety.
Let $v=\chi^I\cdot\rho(x)$ be a $k$-vector field on $X$,
where $I\in M$ and $x\in\wedge^k N_\C ~(x\neq 0)$. 
Then $v$ is holomorphic on $X$ if and only if 
\begin{equation*}
x\in N_I^k(\Delta) \quad \text{and}
\quad I\in\bigcap_{\sigma\in\Delta(n)}S_k(\sigma).
\end{equation*}
\end{lemma}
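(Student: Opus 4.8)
The plan is to reduce the global statement on $X = X_\Delta$ to the local statements on the affine charts $U_\sigma$ ($\sigma \in \Delta(n)$) that were already established in Lemma \ref{VIksigma-lem1}, using the fact that $X = \bigcup_{\sigma \in \Delta(n)} U_\sigma$ is an open cover and that holomorphicity is a local property. The key bridge is Proposition \ref{NIk-lem}, which identifies $\bigcap_{\sigma \in \Delta(n)} N_I^k(\sigma) = N_I^k(\Delta)$.

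First I would observe that since $X = \bigcup_{\sigma \in \Delta(n)} U_\sigma$, a meromorphic $k$-vector field $v$ on $X$ is holomorphic on $X$ if and only if its restriction $v|_{U_\sigma}$ is holomorphic on $U_\sigma$ for every $\sigma \in \Delta(n)$. Now fix $\sigma \in \Delta(n)$ and consider $v|_{U_\sigma} = \chi^I \cdot \rho(x)|_{U_\sigma}$. Writing $w = \rho(x)|_{U_\sigma} \in W^k(\sigma)$, we may apply Lemma \ref{VIksigma-lem1}: since $x \neq 0$ and $\rho$ is injective on $\wedge^k N_\C$, we have $w \neq 0$, so $v|_{U_\sigma}$ is holomorphic on $U_\sigma$ if and only if $I \in S_k(\sigma)$ and $w \in W_I^k(\sigma)$. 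By the definition $W_I^k(\sigma) = \rho(N_I^k(\sigma))|_{U_\sigma}$ together with the injectivity of $\rho$, the condition $w \in W_I^k(\sigma)$ is equivalent to $x \in N_I^k(\sigma)$.

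Combining these equivalences over all $\sigma \in \Delta(n)$: $v$ is holomorphic on $X$ if and only if, for every $\sigma \in \Delta(n)$, both $I \in S_k(\sigma)$ and $x \in N_I^k(\sigma)$. The first family of conditions says $I \in \bigcap_{\sigma \in \Delta(n)} S_k(\sigma)$. For the second family, one needs that $I$ actually lies in $S_\sigma$ so that $N_I^k(\sigma)$ is defined; but $I \in S_k(\sigma) \subseteq S_\sigma$ is already guaranteed by the first condition, so $x \in \bigcap_{\sigma \in \Delta(n)} N_I^k(\sigma)$, and by Proposition \ref{NIk-lem} this intersection equals $N_I^k(\Delta)$. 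Conversely, if $x \in N_I^k(\Delta)$ and $I \in \bigcap_\sigma S_k(\sigma)$, then $N_I^k(\Delta) \subseteq N_I^k(\sigma)$ for each $\sigma$ (the Remark following Proposition \ref{NIk-lem}), so all local conditions hold and $v$ is holomorphic on $X$.

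The only genuinely delicate point is making sure the indexing sets match up: $N_I^k(\Delta)$ is a priori only defined for $I \in S_\Delta = \bigcap_\sigma S_\sigma$, so I should note at the outset that if $I \notin S_\sigma$ for some $\sigma$, then $\chi^I \cdot w$ fails to be holomorphic on $U_\sigma$ for any nonzero $w$ (again by Lemma \ref{VIksigma-lem1}, since then $I \notin S_k(\sigma)$), hence $v$ is not holomorphic on $X$ and the claimed conditions also fail — so the equivalence holds vacuously in that case. Everything else is a bookkeeping assembly of the cited lemmas; I expect no serious obstacle, only the need to state carefully that holomorphicity on $X$ is detected chart-by-chart and that $\rho$ is injective on polyvectors (which was recorded in the introduction).
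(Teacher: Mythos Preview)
Your proposal is correct and follows essentially the same route as the paper: reduce to the affine charts $U_\sigma$ via the open cover $X=\bigcup_{\sigma\in\Delta(n)}U_\sigma$, apply Lemma~\ref{VIksigma-lem1} on each chart, and then intersect the resulting conditions. You are in fact slightly more careful than the paper, which stops at $x\in\bigcap_{\sigma}N_I^k(\sigma)$ without explicitly invoking Proposition~\ref{NIk-lem} to rewrite this as $N_I^k(\Delta)$, and which does not separately address the case $I\notin S_\Delta$; your extra bookkeeping on these points and on the injectivity of $\rho$ is appropriate.
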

\begin{proof}
Since $X=\cup_{\sigma\in\Delta(n)}U_{\sigma}$, $v=\chi^I\cdot\rho(x)$ 
is holomorphic on $X$ if and only if $v=\chi^I\cdot\rho(x)$ is holomorphic on 
$U_\sigma$ for all $\sigma\in\Delta(n)$. 
By Lemma \ref{VIksigma-lem1},
$v=\chi^I\cdot\rho(x)$ is holomorphic on $U_\sigma$ if and only if
$I\in S_k(\sigma)$ and $\rho(x)\in W_I^k(\sigma)$. 
However, by Equation \eqref{WIkSigma-eqn}, we have
$$\rho(x)\in W_I^k(\sigma)\Longleftrightarrow x\in N_I^k(\sigma).$$
Thus $v$ is holomorphic on $X$ if and if 
\begin{equation*}
x\in \bigcap_{\sigma\in\Delta(n)}N_I^k(\sigma) \quad \text{and}
\quad I\in\bigcap_{\sigma\in\Delta(n)}S_k(\sigma).
\end{equation*}
\end{proof}

Let $X=X_\Delta$ be a smooth compact toric variety of dimension $n$. 
Let
\begin{equation*}
\mathcal{V}_I(\Delta)=\rho(\cE_I(\Delta))\in W^{|I_\Delta|}
\end{equation*}
for $I\in S_\Delta$.
Recall that
\begin{equation*}
W_I^k(\Delta)=\rho(N_I^k(\Delta))=
\begin{cases}
\C\cdot\mathcal{V}_I(\Delta)\wedge  W^{k-|I_\Delta|}\quad &\text{for}~0<|I_\Delta|\leq k,\\
W^k\quad &\text{for}~ |I_\Delta|=0,\\
0\quad &\text{for}~|I_\Delta|>k
\end{cases}
\end{equation*}
and
\begin{equation*}
V_I^k(\Delta)=\chi^I\cdot W_I^k(\Delta)=\{\chi^I\cdot w\mid w\in W_I^k(\Delta)\}.
\end{equation*}
The elements in $V_I^k(\Delta)$ are considered as meromorphic $k$-vector fields on $X=X_\Delta$.

\begin{lemma}\label{weightspace-lem2}
Let $X=X_\Delta$ be a smooth compact toric variety of dimension $n$. For all
$I\in S_k(\Delta)$, any $k$-vector field in $V_I^k(\Delta)$ is holomorphic on $X$.
\end{lemma}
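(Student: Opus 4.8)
The plan is to reduce the statement to Lemma \ref{weightspace-lem1}, the local-to-global criterion for holomorphicity, and then verify its two hypotheses for any $I\in S_k(\Delta)$. Fix $I\in S_k(\Delta)$ and an arbitrary nonzero $w\in W_I^k(\Delta)$; write $w=\rho(x)$ with $x\in N_I^k(\Delta)$, which is possible by definition of $W_I^k(\Delta)=\rho(N_I^k(\Delta))$. The element $\chi^I\cdot w = \chi^I\cdot\rho(x)$ is a priori only a meromorphic $k$-vector field on $X$, and by Lemma \ref{weightspace-lem1} it is holomorphic on $X$ precisely when $x\in N_I^k(\Delta)$ and $I\in\bigcap_{\sigma\in\Delta(n)}S_k(\sigma)$. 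The first condition holds by our choice of $x$, so everything comes down to the membership $I\in\bigcap_{\sigma\in\Delta(n)}S_k(\sigma)$.

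For the second condition, I would invoke Proposition \ref{PFS-lem}(3), which states exactly that $S_k(\Delta)\subseteq\bigcap_{\sigma\in\Delta(n)}S_k(\sigma)$. Since we assumed $I\in S_k(\Delta)$, this immediately gives $I\in S_k(\sigma)$ for every $\sigma\in\Delta(n)$, hence $I\in\bigcap_{\sigma\in\Delta(n)}S_k(\sigma)$. Combining the two verified hypotheses, Lemma \ref{weightspace-lem1} yields that $\chi^I\cdot\rho(x)=\chi^I\cdot w$ is holomorphic on $X$. As $w$ was an arbitrary element of $W_I^k(\Delta)$ (the zero $k$-vector field being trivially holomorphic), every element of $V_I^k(\Delta)=\chi^I\cdot W_I^k(\Delta)$ is holomorphic on $X$, which is the assertion.

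There is essentially no obstacle here: the lemma is a bookkeeping consequence of results already assembled, namely the equivalence $\rho(x)\in W_I^k(\sigma)\Leftrightarrow x\in N_I^k(\sigma)$ used inside Lemma \ref{weightspace-lem1}, the intersection formula $\bigcap_\sigma N_I^k(\sigma)=N_I^k(\Delta)$ from Proposition \ref{NIk-lem}, and the inclusion $S_k(\Delta)\subseteq\bigcap_\sigma S_k(\sigma)$ from Proposition \ref{PFS-lem}. The only point requiring a word of care is the case distinction in the definition of $W_I^k(\Delta)$: when $|I_\Delta|>k$ the space $W_I^k(\Delta)$ is zero and there is nothing to prove, and when $|I_\Delta|\le k$ the condition $I\in S_k(\Delta)$ is automatically satisfied by Lemma \ref{SDelta-lem}, so the hypothesis of the present lemma is consistent with $W_I^k(\Delta)$ being nonzero. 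I would also remark that this lemma is precisely the ``moveable singularity'' claim advertised in the introduction, and it is one of the two inclusions needed for the decomposition in Theorem \nameref{General-multiVect-thm}; the reverse direction (that these exhaust $H^0(X,\wedge^kT_X)$) will be handled separately.
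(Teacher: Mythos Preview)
Your proof is correct and follows essentially the same route as the paper: write $w=\rho(x)$ with $x\in N_I^k(\Delta)$, invoke Proposition~\ref{PFS-lem} to get $I\in\bigcap_{\sigma\in\Delta(n)}S_k(\sigma)$, and conclude by Lemma~\ref{weightspace-lem1}. The paper additionally cites Proposition~\ref{NIk-lem} to pass from $x\in N_I^k(\Delta)$ to $x\in\bigcap_\sigma N_I^k(\sigma)$, but since the statement of Lemma~\ref{weightspace-lem1} is already phrased in terms of $N_I^k(\Delta)$, your shortcut is equally valid.
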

\begin{proof}
For any $k$-vector field $v=\chi^I\cdot w\in V_I^k(\Delta)$, 
there exists $x\in N_I^k(\Delta)$, 
such that $w=\rho(x)$. By Proposition \ref{NIk-lem}, $x\in N_I^k(\Delta)$ implies
$x\in \bigcap_{\sigma\in\Delta(n)}N_I^k(\sigma)$.
By Proposition \ref{PFS-lem}, $I\in S_k(\Delta)$ implies
$I\in\bigcap_{\sigma\in\Delta(n)}S_k(\sigma)$.
By Lemma \ref{weightspace-lem1}, $v$ is holomorphic on $X$.
\end{proof}

By Lemma \ref{weightspace-lem2}, $V_I^k(\Delta)$ can be considered as a subspace of 
$H^0(X,\wedge^k T_X)$. 

\begin{proposition}\label{weightspace-lem} 
Let $X=X_\Delta$ be a smooth compact toric variety of dimension $n$. 
Let $V_{I}^k$ be the weight space corresponding to the character $I\in M$ for 
the $T$-action on $H^0(X,\wedge^k T_X)$. 
\begin{enumerate}
\item
We have
\begin{equation*}
V_{-I}^k=
\begin{cases}
V_I^k(\Delta)\quad &\text{for all}\quad I\in S_k(\Delta),\\
0\quad &\text{for all}\quad I\in M\backslash S_k(\Delta).
\end{cases}
\end{equation*}
\item
For any $I\in S_k(\Delta)$, we have
\begin{equation*}
\dim V_I^k(\Delta)={{n-|I_\Delta|}\choose{k-|I_\Delta|}}.
\end{equation*}
\end{enumerate}
\end{proposition}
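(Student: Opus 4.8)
The plan is to prove both parts of Proposition~\ref{weightspace-lem} by reducing everything to the local affine pieces $U_\sigma$ and gluing, using the decomposition $X=\bigcup_{\sigma\in\Delta(n)}U_\sigma$ together with the results already established for cones.

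\textbf{Part (1).} First I would recall from Lemma~\ref{VIk-lem} that every $v\in V_{-I}^k$ can be written on $T$ as $v|_T=\chi^I\cdot w|_T$ for a unique $w\in W^k$, say $w=\rho(x)$ with $x\in\wedge^k N_\C$; since $T$ is dense in $X$ this identifies $V_{-I}^k$ with the set of pairs $(I,x)$ for which the meromorphic $k$-vector field $\chi^I\cdot\rho(x)$ is actually holomorphic on $X$ (this is exactly Lemma~\ref{VIk-lem1}). Next, by Lemma~\ref{weightspace-lem1}, holomorphicity on $X$ is equivalent to $x\in\bigcap_{\sigma\in\Delta(n)}N_I^k(\sigma)$ together with $I\in\bigcap_{\sigma\in\Delta(n)}S_k(\sigma)$. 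Now invoke Proposition~\ref{NIk-lem}: $\bigcap_\sigma N_I^k(\sigma)=N_I^k(\Delta)$, which vanishes precisely when $I\notin S_k(\Delta)$ (i.e. $|I_\Delta|>k$). So for $I\notin S_k(\Delta)$ we get $V_{-I}^k=0$. For $I\in S_k(\Delta)$, the condition $I\in\bigcap_\sigma S_k(\sigma)$ is automatic by Proposition~\ref{PFS-lem}(3), and the space of admissible $x$ is exactly $N_I^k(\Delta)$; applying $\chi^I\cdot\rho(-)$ identifies $V_{-I}^k$ with $\chi^I\cdot\rho(N_I^k(\Delta))=\chi^I\cdot W_I^k(\Delta)=V_I^k(\Delta)$. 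Conversely, Lemma~\ref{weightspace-lem2} already shows every element of $V_I^k(\Delta)$ is holomorphic on $X$, and a computation as in Lemma~\ref{VIk-lem1} shows it lies in the $(-I)$-weight space; so the two spaces coincide.

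\textbf{Part (2).} Since $\rho:\wedge^k N_\C\to W^k$ is an isomorphism and multiplication by $\chi^I$ is injective on $W^k$ (it is injective already on $T$), we have $\dim V_I^k(\Delta)=\dim W_I^k(\Delta)=\dim N_I^k(\Delta)$. It therefore suffices to compute $\dim N_I^k(\Delta)$. Write $i=|I_\Delta|$. If $i=0$ then $N_I^k(\Delta)=\wedge^k N_\C$, of dimension $\binom nk=\binom{n-i}{k-i}$. If $0<i\le k$, choose a basis $\{e(\alpha_{s_1}),\dots,e(\alpha_{s_i})\}$ of $F^\perp_I(\Delta)$ and extend it to a basis $\{e(\alpha_{s_1}),\dots,e(\alpha_{s_i}),f_1,\dots,f_{n-i}\}$ of $N_\C$; by Lemma~\ref{WDelta-lem}, $N_I^k(\Delta)$ consists of those $x$ with $x\wedge e(\alpha_{s_t})=0$ for $1\le t\le i$, and expanding $x$ in the induced basis of $\wedge^k N_\C$ shows this forces $x=\cE_I(\Delta)\wedge y$ with $y\in\wedge^{k-i}\langle f_1,\dots,f_{n-i}\rangle$, a space of dimension $\binom{n-i}{k-i}$. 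If $i>k$ then $N_I^k(\Delta)=0=\binom{n-i}{k-i}$ (the binomial coefficient being $0$ once $k-i<0$). In all cases $\dim V_I^k(\Delta)=\binom{n-|I_\Delta|}{k-|I_\Delta|}$.

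The main obstacle is Part~(1), and specifically making airtight the passage from ``holomorphic on each $U_\sigma$'' to the clean characterization via $N_I^k(\Delta)$ and $S_k(\Delta)$ — but this has been completely absorbed into the earlier results (Lemmas~\ref{VIk-lem}, \ref{VIk-lem1}, \ref{VIksigma-lem1}, \ref{weightspace-lem1}, \ref{weightspace-lem2} and Propositions~\ref{PFS-lem}, \ref{NIk-lem}), so with those in hand the argument is essentially bookkeeping. The one point requiring a little care is checking that $\chi^I\cdot\rho(N_I^k(\Delta))$ lies in the $(-I)$-weight space (not some other weight), which follows from the transformation law $t_*(\chi^I\cdot w)=\chi^{-I}(t)\,\chi^I\cdot w$ for $T$-invariant $w$, exactly as in the proof of Lemma~\ref{VIk-lem1}; and that the decomposition $H^0(X,\wedge^kT_X)=\bigoplus_I V_I^k$ lets us recover each weight space from its restriction to $T$, which is where density of $T$ in $X$ is used. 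Part~(2) is then purely linear algebra with no real obstruction.
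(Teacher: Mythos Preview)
Your proposal is correct and follows essentially the same approach as the paper: reduce to the affine charts via Lemmas~\ref{VIk-lem}, \ref{VIk-lem1}, \ref{weightspace-lem1}, then use Propositions~\ref{PFS-lem} and~\ref{NIk-lem} to identify $\bigcap_\sigma N_I^k(\sigma)$ with $N_I^k(\Delta)$, and compute the dimension by extending a basis of $F_I^\perp(\Delta)$. The only cosmetic difference is that the paper splits the case $I\notin S_k(\Delta)$ explicitly into the two subcases $I\notin\bigcap_\sigma S_k(\sigma)$ and $I\in(\bigcap_\sigma S_k(\sigma))\setminus S_k(\Delta)$, whereas you collapse them; your version is fine because Proposition~\ref{NIk-lem} is only stated for $I\in S_\Delta$, and for $I\notin S_\Delta$ the condition $I\in\bigcap_\sigma S_k(\sigma)$ already fails by Proposition~\ref{PFS-lem}(1), so both subcases are covered.
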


\begin{proof}
\begin{enumerate}
\item

\begin{enumerate}
\item 
We will prove here $V_{-I}^k=V_I^k(\Delta)$ for all $I\in S_k(\Delta)$.

By Lemma \ref{VIk-lem}, any element $v\in V_{-I}^k$ can be written as 
$$v=\chi^I\cdot w,$$ where $w\in W^k$.
And by Lemma \ref{VIk-lem1}, $v=\chi^I\cdot w\in V_{-I}^k$ if and only if 
$v=\chi^I\cdot w$ is holomorphic on $X$.

Suppose that $w=\rho(x)$, where $x\in\wedge^k N_\C$.
By Lemma \ref{weightspace-lem1},  $v$ is holomorphic on $X$ if and only if 
\begin{equation}\label{weightspace-lem-eqn1}
x\in \bigcap_{\sigma\in\Delta(n)}N_I^k(\sigma) \quad \text{and}
\quad I\in\bigcap_{\sigma\in\Delta(n)}S_k(\sigma).
\end{equation}

However, Proposition \ref{NIk-lem} tells us
$$\bigcap_{\sigma\in\Delta(n)}N_I^k(\sigma)=N_I^k(\Delta)$$
for all $I\in S_k(\Delta)$. 
And by Proposition \ref{PFS-lem}, $I\in S_k(\Delta)$ implies 
$$I\in\bigcap_{\sigma\in\Delta(n)}S_k(\sigma).$$

As a consequence, for $I\in S_k(\Delta)$, 
$v=\chi^I\cdot\rho(x)$ is holomorphic on $X$ if and only if 
$$x\in N_I^k(\Delta),$$
which is equivalent to 
$$v\in V_I^k(\Delta).$$

Hence we have $V_{-I}^k=V_I^k(\Delta)$ for all $I\in S_k(\Delta)$.

\item
We will prove here $V_{-I}^k=0$ for all $I\notin S_k(\Delta)$.

By Proposition \ref{PFS-lem}, we have
\begin{equation}\label{weightspace-lem-eqn2.2}
S_k(\Delta)\subseteq\bigcap_{\sigma\in\Delta(n)}S_k(\sigma).
\end{equation}
As a consequence, for $I\notin S_k(\Delta)$, we have
\begin{equation*} 
I\notin\bigcap_{\sigma\in\Delta(n)}S_k(\sigma)\quad\text{or}\quad
I\in\bigcap_{\sigma\in\Delta(n)}S_k(\sigma)\backslash S_k(\Delta).
\end{equation*}
By Lemma \ref{VIk-lem}, any $v\in V_{-I}^k$ can be written as $v=\chi^I\cdot w$, 
where $w\in W^k$.
By Lemma \ref{VIk-lem1}, $v=\chi^I\cdot w\in V_{-I}^k$ if and only if 
$v=\chi^I\cdot w$ is holomorphic on $X$.
Suppose that $w=\rho(x)$, where $x\in\wedge^k N_\C$. Then $v=\chi^I\cdot\rho(x)$.
According to Lemma \ref{weightspace-lem1}, the $k$-vector field
$v=\chi^I\cdot\rho(x)$ $(v\neq 0)$ is holomorphic on $X$ if and only if 
 \begin{equation}\label{weightspace-lem-eqn2}
x\in \bigcap_{\sigma\in\Delta(n)}N_I^k(\sigma)~(x\neq0)~\quad \text{and}
\quad I\in\bigcap_{\sigma\in\Delta(n)}S_k(\sigma).
\end{equation}

\begin{enumerate}
\item 
For any $I\notin\bigcap_{\sigma\in\Delta(n)}S_k(\sigma)$, 
by Equation \eqref{weightspace-lem-eqn2}, we get
\begin{equation}\label{weightspace-lem-eqn2.1}
V_{-I}^k=0.
\end{equation}

\item
For any 
$I\in(\bigcap_{\sigma\in\Delta(n)}S_k(\sigma))\backslash S_k(\Delta)$, 
by Proposition \ref{PFS-lem}, we have
\begin{equation}\label{weightspace-lem-eqn3}
I\in(\bigcap_{\sigma\in\Delta(n)}S_k(\sigma))\backslash S_k(\Delta)\subseteq (\bigcap_{\sigma\in\Delta(n)}S_\sigma)\backslash S_k(\Delta)=S_\Delta\backslash S_k(\Delta).
\end{equation}
Moreover, by Proposition \ref{NIk-lem} and Equation \eqref{weightspace-lem-eqn3},  
we have
\begin{equation}\label{weightspace-lem-eqn4}
\bigcap_{\sigma\in\Delta(n)}N_I^k(\sigma)=0
\end{equation}
for all $I\in(\bigcap_{\sigma\in\Delta(n)}S_k(\sigma))\backslash S_k(\Delta)$.
By Equations \eqref{weightspace-lem-eqn2} and \eqref{weightspace-lem-eqn4}, 
we get
\begin{equation}\label{weightspace-lem-eqn5}
V_{-I}^k=0\quad\text{for all}\quad
I\in(\bigcap_{\sigma\in\Delta(n)}S_k(\sigma))\backslash S_k(\Delta).
\end{equation}
\end{enumerate}

By Equations \eqref{weightspace-lem-eqn2.2},  \eqref{weightspace-lem-eqn2.1} and  \eqref{weightspace-lem-eqn5}, 
we obtain $V_{-I}^k=0$ for $I\notin S_k(\Delta)$.

\end{enumerate}

\item
For any  $I\in S_k(\Delta)$, we have $0\leq |I_\Delta|\leq k$ . 
Suppose $0\leq |I_\Delta|=i\leq k$.
And suppose that
 \begin{gather*}
 \cE_I(\Delta)=e(\alpha_{s_1}) \wedge\ldots\wedge e(\alpha_{s_i})\in\wedge^{i} N,
\end{gather*}
where $\{e(\alpha_{s_1}),\ldots,e(\alpha_{s_i})\}$ is a basis of 
$F^{\perp}_I(\Delta)\subseteq N_\C$.

We extend $\{e(\alpha_{s_1}),\ldots,e(\alpha_{s_i})\}$ to be a basis to $N_\C$.
 Suppose the set
 $$\{e(\alpha_{s_1}),\ldots,e(\alpha_{s_i}), f_1,\ldots, f_{n-i}\}$$
is a basis of $N_\C$. Then the set
$$\{\rho(e(\alpha_{s_1})),\ldots,\rho(e(\alpha_{s_i})),\rho( f_1),\ldots, \rho(f_{n-i})\}$$
is a basis of $W$.
As a consequence, the set
$$\{\chi^I\cdot\rho(\cE_I(\Delta))\wedge\rho(f_{t_1})\wedge\ldots\rho(f_{t_{k-i}})\mid
1\leq t_1< \ldots<t_{k-i}\leq n-i \}$$
is a basis of the vector space 
$V_I^k(\Delta)=\chi^I\cdot\rho(\cE_I(\Delta))\wedge W^{k-i}$.
 Hence we have
$$\dim V_I^k(\Delta)={{n-i}\choose{k-i}}={{n-|I_\Delta|}\choose{k-|I_\Delta|}}.$$
\end{enumerate}
\end{proof}

\subsection{The proof of  Theorem \nameref{General-multiVect-thm}}
\begin{proof}
\begin{enumerate}
\item
By Equation \eqref{VIk-eqn}, we have
\begin{equation}\label{General-multiVect-thmpf-eqn1}
H^0(X, \wedge^kT_{X})=\bigoplus_{I\in M}V_I^k=\bigoplus_{I\in M}V_{-I}^k
\end{equation}
By Proposition \ref{weightspace-lem}, we have
\begin{equation}\label{General-multiVect-thmpf-eqn2}
V_{-I}^k=
\begin{cases}
V_I^k(\Delta)\quad &\text{for}~ I\in S_k(\Delta),\\
0\quad &\text{for}~I\notin S_k(\Delta).
\end{cases}
\end{equation}
By Equations \eqref{General-multiVect-thmpf-eqn1} and 
\eqref{General-multiVect-thmpf-eqn2}, we get
\begin{equation}\label{H0k-thmpf-eqn}
H^0(X, \wedge^kT_{X})=\bigoplus_{I\in S_k(\Delta)}V_I^k(\Delta).
\end{equation} 
\item
For any $I\in S(\Delta, i)$, $|I_\Delta|=i$, by Proposition \ref{weightspace-lem}, we have
\begin{equation}\label{dimVIk-thmpf-eqn}
\dim V_I^k(\Delta)={{n-i}\choose{k-i}}.
\end{equation}
According to the definition of $S(\Delta,i)$, we have
\begin{equation}\label{countSD-eqn}
\#S(\Delta,i)=\sum_{F_j\in P_\Delta(n-i)}\# (int(F_j)\cap M).
\end{equation}
By Equations \eqref{dimVIk-thmpf-eqn} and \eqref{countSD-eqn}, we have
\begin{equation}\label{dimVIKSD-eqn}
\sum_{I\in S(\Delta,i)}\dim V_I^k(\Delta)=
\sum_{F_j\in P_{\Delta}( n-i)}{{n-i}\choose{k-i}}\cdot\#(int(F_j)\cap M).
\end{equation}
Since $S_k(\Delta)=\bigcup_{i=0}^k S(\Delta, i),$ by Equation \eqref{H0k-thmpf-eqn}
and Equation \eqref{dimVIKSD-eqn}, we get
\begin{equation*}
\dim H^0(X, \wedge^kT_{X})=\sum_{i=0}^k \sum_{I\in S(\Delta,i)}\dim V_I^k(\Delta)
=\sum_{i=0}^k \sum_{F_j\in P_{\Delta}( n-i)}{{n-i}\choose{k-i}}\cdot\#(int(F_j)\cap M).
\end{equation*}
\end{enumerate}
\end{proof}

\begin{bibdiv}
\begin{biblist}

\bib{B-K 98}{article}{
   author={Barannikov, Sergey},
   author={Kontsevich, Maxim},
   title={Frobenius manifolds and formality of Lie algebras of polyvector
   fields},
   journal={Internat. Math. Res. Notices},
   date={1998},
   number={4},
   pages={201--215},
   issn={1073-7928},
   review={\MR{1609624}},
   doi={10.1155/S1073792898000166},
}

\bib{Cox}{article}{
   author={Cox, David},
   title={What is a toric variety?},
   conference={
      title={Topics in algebraic geometry and geometric modeling},
   },
   book={
      series={Contemp. Math.},
      volume={334},
      publisher={Amer. Math. Soc., Providence, RI},
   },
   date={2003},
   pages={203--223},
   review={\MR{2039974}},
   doi={10.1090/conm/334/05983},
}

\bib{Danilov}{article}{
   author={Danilov, V. I.},
   title={The geometry of toric varieties},
   journal={Uspekhi Mat. Nauk.},
   volume={33},
   date={1978},
   pages={85-134},
   journal={English translation, Russian Maths. Surveys},
   volume={33},
   date={1978},
   pages={97-154},
}

\bib{Demazure}{article}{
   author={Demazure, Michel},
   title={Sous-groupes alg\'ebriques de rang maximum du groupe de Cremona},
   language={French},
   journal={Ann. Sci. \'Ecole Norm. Sup. (4)},
   volume={3},
   date={1970},
   pages={507--588},
   issn={0012-9593},
   review={\MR{0284446}},
}

\bib{D-R-W 15}{article}{
   author={Dolgushev, V. A.},
   author={Rogers, C. L.},
   author={Willwacher, T. H.},
   title={Kontsevich's graph complex, GRT, and the deformation complex of
   the sheaf of polyvector fields},
   journal={Ann. of Math. (2)},
   volume={182},
   date={2015},
   number={3},
   pages={855-943},
   issn={0003-486X},
   review={\MR{3418532}},
   doi={10.4007/annals.2015.182.3.2},
}

\bib{Fulton}{book}{
   author={Fulton, William},
   title={Introduction to toric varieties},
   series={Annals of Mathematics Studies},
   volume={131},
   note={The William H. Roever Lectures in Geometry},
   publisher={Princeton University Press, Princeton, NJ},
   date={1993},
   pages={xii+157},
   isbn={0-691-00049-2},
   review={\MR{1234037}},
   doi={10.1515/9781400882526},
}

\bib{Gualtieri 11}{article}{
   author={Gualtieri, Marco},
   title={Generalized complex geometry},
   journal={Ann. of Math. (2)},
   volume={174},
   date={2011},
   number={1},
   pages={75--123},
   issn={0003-486X},
   review={\MR{2811595}},
   doi={10.4007/annals.2011.174.1.3},
}

\bib{Hitchin 11}{article}{
   author={Hitchin, Nigel},
   title={Stable bundles and polyvector fields},
   conference={
      title={Complex and differential geometry},
   },
   book={
      series={Springer Proc. Math.},
      volume={8},
      publisher={Springer, Heidelberg},
   },
   date={2011},
   pages={135--156},
   review={\MR{2964473}},
}

\bib{Hong 19}{article}{
   author={Hong, Wei},
   title={Poisson cohomology of holomorphic toric Poisson manifolds. I},
   journal={J. Algebra},
   volume={527},
   date={2019},
   pages={147--181},
   issn={0021-8693},
   review={\MR{3922831}},
   doi={10.1016/j.jalgebra.2019.03.001},
}

\bib{Materov 02}{article}{
   author={Materov, Evgeny N.},
   title={The Bott formula for toric varieties},
   language={English, with English and Russian summaries},
   journal={Mosc. Math. J.},
   volume={2},
   date={2002},
   number={1},
   pages={161--182, 200},
   issn={1609-3321},
   review={\MR{1900589}},
}

\bib{Oda}{book}{
   author={Oda, Tadao},
   title={Convex bodies and algebraic geometry},
   series={Ergebnisse der Mathematik und ihrer Grenzgebiete (3) [Results in
   Mathematics and Related Areas (3)]},
   volume={15},
   note={An introduction to the theory of toric varieties;
   Translated from the Japanese},
   publisher={Springer-Verlag, Berlin},
   date={1988},
   pages={viii+212},
   isbn={3-540-17600-4},
   review={\MR{922894}},
}

\bib{M-R 19}{article}{
   author={Mandel, Travis},
   author={Ruddat, Helge},
   title={Tropical quantum field theory, mirror polyvector fields, and multiplicities of tropical curves },
   journal={arXiv 1902.07183},
   date={2019},
}

\end{biblist}
\end{bibdiv}

\end{document}